\documentclass[12pt,reqno]{amsart}
\usepackage{amssymb,amscd,amsmath}
\usepackage[alphabetic]{amsrefs}
\usepackage[all]{xy}
\usepackage[headings]{fullpage}
\usepackage[T1]{fontenc}
\usepackage{libertine}
\usepackage{stackengine}
\usepackage{mathtools}
\usepackage{graphicx}
\usepackage{multirow}
\usepackage{colortbl}
\usepackage{pdfsync}
\usepackage{tikz}

\numberwithin{equation}{section}

\theoremstyle{plain}

\newtheorem{prop}[subsection]{Proposition}

\newtheorem{lemma}[subsection]{Lemma}
\newtheorem{cor}[subsection]{Corollary}

\newtheorem*{thm*}{Theorem}
\newtheorem*{mthm*}{Main Theorem}
\newtheorem{prop-def}[subsection]{Proposition-Definition}
\newtheorem{def-prop}[subsection]{Definition-Proposition}
\newtheorem{def-lemma}[subsection]{Definition-Lemma}

\theoremstyle{definition}
\newtheorem{defn}[subsection]{Definition}

\newtheorem*{defn*}{Definition}

\theoremstyle{remark}
\newtheorem{rem}[subsection]{Remark}

\newtheorem{s-example}[subsection]{Example}
\newtheorem{s-examples}[subsection]{Examples}






\usepackage[OT2,T1]{fontenc}
\DeclareSymbolFont{cyrletters}{OT2}{wncyr}{m}{n}
\DeclareMathSymbol{\sha}{\mathalpha}{cyrletters}{"58}


\newcommand{\CC}{\mathcal{C}}

\newcommand{\GG}{{\mathcal{G}}}
\newcommand{\HH}{{\mathcal{H}}}

\newcommand{\EE}{\mathcal{E}}

\newcommand{\LL}{\mathcal{L}}
\newcommand{\MM}{\mathcal{M}}

\newcommand{\OO}{\mathcal{O}}


\newcommand{\Z}{\mathbb{Z}}

\newcommand{\R}{\mathbb{R}}
\newcommand{\C}{\mathbb{C}}




\newcommand{\tensor}{\otimes}

\newcommand{\nodiv}{\not|}

\def\nodiv{\mathrel{\mathchoice{\not|}{\not|}{\kern-.2em\not\kern.2em|}
{\kern-.2em\not\kern.2em|}}}

\newcommand{\SL}{\mathrm{SL}}

\newcommand{\pmat}[1]{\begin{pmatrix}#1\end{pmatrix}}


\DeclareMathOperator{\ord}{ord}

\DeclareMathOperator{\dvsr}{div}

\DeclareMathOperator{\spec}{Spec}


\newcommand{\RN}[1]{%
  \textup{\uppercase\expandafter{\romannumeral#1}}%
}

\def\clap#1{\hbox to 0pt{\hss#1\hss}}

\makeatletter
\newcommand*\bigcdot{\mathpalette\bigcdot@{.5}}
\newcommand*\bigcdot@[2]{\mathbin{\vcenter{
               \hbox{\scalebox{#2}{$\m@th#1\bullet$}}}}}
\makeatother

\numberwithin{equation}{section}
\DeclareMathOperator{\Ig}{Ig}
\renewcommand{\and}{\quad\text{and}\quad}

\usepackage{color} 

\begin{document}
\title{New unlikely intersections on elliptic surfaces}

\author{Douglas Ulmer}
\address{Department of Mathematics \\ University of Arizona
 \\ Tucson, AZ~~85721 USA}
\email{ulmer@arizona.edu}

\author{Jos\'e Felipe Voloch}
\address{School of Mathematics and Statistics, University of
  Canterbury, Private Bag 4800, Christchurch 8140, New Zealand} 
\email{felipe.voloch@canterbury.ac.nz}

\date{\today}

\subjclass{Primary 14G27, 11G05;
Secondary 11G25, 14D10, 14G17}



\keywords{Elliptic surface, unlikely intersection, Manin map, $p$-descent}

\begin{abstract}
  Consider a Jacobian elliptic surface $\EE\to\CC$ with a section $P$
  of infinite order.  Previous work of the first author and Urz\'ua
  over the complex numbers gives a bound on the number of tangencies
  between $P$ and a torsion section of $\EE$ (an ``unlikely
  intersection''), and more precisely, an exact formula for the
  weighted number of tangencies between $P$ and elements of the
  ``Betti foliation''.  This work used analytic techniques that
  apparently do not generalize to positive characteristic.  In this paper,
  we extend their work to characteristic $p$, and we develop a
  second approach to tangency properties of algebraic curves on a
  complex elliptic surface, yielding a new family of unlikely
  intersections with a strong connection to a famous homomorphism of
  Manin.  We also correct inaccuracies in the literature about this
  homomorphism.
\end{abstract}

\maketitle

\section{Introduction}

\subsection{Motivation}
We study unlikely intersections in the form of tangencies of curves on
elliptic surfaces.  In \cite{UlmerUrzua21}, the first author and
Urz\'ua worked over the complex numbers and gave a bound on the number
of tangencies between a section of infinite order and $n$-torsion
(multi-)sections for varying $n$.  Such tangencies may be viewed as
``unlikely intersections.''  More generally, they introduced a
``Betti'' foliation (considered independently by several other
authors) of a dense open subset of the elliptic surface by
multisections generalizing the $n$-torsion multisections, and they
showed that there is an exact formula for the weighted number of
tangencies between the given section of infinite order and the leaves
of the Betti foliation.  These leaves are generally of infinite degree
over the base, so non-algebraic, and the bound on tangencies uses a
certain non-holomorphic (but real-analytic) 1-form.  It is thus not
evident whether or how these bounds might be generalized to base
fields of positive characteristic.

In this paper, we extend their work to positive characteristic. Our
results are somewhat similar but there are some differences as
well. In doing so, we noticed that there is a second way of looking at
tangency properties of algebraic curves on a complex elliptic surface,
which we develop. Our results in characteristic $p$ are somehow
intermediate between the results of \cite{UlmerUrzua21} and this
second approach to tangencies in characteristic zero.

For our purposes, an elliptic surface is a smooth projective algebraic
surface $\EE/k$, where $k$ is an algebraically closed field, equipped
with an elliptic fibration $\pi: \EE\to\CC$, where $\CC/k$ is a smooth
projective algebraic curve, admitting a zero section $O$. The generic
fiber of $\pi$ is thus an elliptic curve $E/K$, where $K=k(\CC)$ is
the function field of $\CC$.  We take a point $P\in E(K)$ of infinite
order and also denote by $P$ the corresponding section of $\pi$.

Our work depends crucially on a homomorphism of groups
$\mu: E(K) \to K$ which was introduced in characteristic zero by Manin
\cite{Manin63b} and generalized to positive characteristic by the
second author \cite{Voloch90}.  (This map was also studied
cohomologically in \cite{Ulmer91} and made very explicit in
\cite{Broumas97}.)  The value $\mu(P)$ is expressed as a rational
function of the coordinates of $P$ and their derivatives. Our results
on tangencies with $P$ are obtained by a careful study of the local
and global properties of $\mu$. We sharpen the local analysis of the
values of $\mu$ done by previous authors and relate them with the
tangency properties were are interested in.  Interpreting the values
of $\mu$ as sections of an appropriate bundle on $\CC$ leads to a
bound on the number of tangencies.

To give the flavor of our results, we state one result in
characteristic $p$ and one over $\C$.

First, let $k$ be algebraically closed of characteristic $p>3$, let
$N>3$ be an integer prime to $p$, and let $\CC=\Ig_1(N)$ be the
(Igusa) curve parameterizing elliptic curves with a point of order $N$
and an Igusa structure of order $p$.  Let $K=k(\CC)$, let $E/K$ be the
universal curve, and let $P\in E(K)$ be a point of infinite order.  We
define a local intersection invariant $I(P,t)$ for $t\in\CC$ which
measures the order of contact between $P$ and local sections which are
divisible by $p$.  (See Definition~\ref{def:I} for details.)  We
consider the set $T$ (``tangencies'') of points $t\in \CC$ where
$I(P,t)>1$, and we divide it into subsets $T_o$ and $T_s$ where the
elliptic curve corresponding to $t$ is ordinary or supersingular
respectively.  Our result is then a bound
\begin{align*}
      |T|\le |T_o|+p|T_s|&\le p(2g-2-d)+(p-1)\delta,\\
      &=p(p-1)\sigma-\delta,
\end{align*}
where $g$ is the genus of $\CC$, $d$ is the degree of a certain bundle
$\omega$, $\delta$ is the number of points where $E$ has bad
reduction, and $\sigma$ is the number of points where $E$ has
supersingular reduction.  In general, the division of $T$ into two
parts is dictated by the zeroes of a certain differential $\lambda$
which measures the failure of Kodaira-Spencer to be an isomorphism.

Now assume that $k=\C$.  We define ``complex Betti leaves'' which are
parameterized by two complex numbers modulo 1 and are such that on a
dense open subset of $\EE$, there is a complex Betti leaf through a
given point with a given tangent direction.  Thus, an unlikely
intersection is when $P$ meets some complex Betti leaf to order at
least 3.  We define an exceptional set $S\subset\CC$ (essentially
where $\EE$ has bad reduction or the period map is not an immersion)
and a set of tangencies $T_\C\subset\CC\setminus S$ where $P$ has
order of contact at least 3 with a complex Betti leaf.  We then find
an upper bound:
\[|T_\C|\le 4g-4-d+|S|.\]

In both characteristic zero and $p$, our arguments have a local aspect
(relating intersection numbers to orders of vanishing of a Manin map)
and a global aspect (interpreting the image of the Manin map as a
section of a suitable line bundle and deducing a bound from degree
considerations).  In both the complex and characteristic $p$ cases,
our results are somewhat looser than those of \cite{UlmerUrzua21} due
to subtleties in the local analysis.

\subsection{Acknowledgements}
The first author thanks L'Institut des Hautes \'Etudes Scientifiques
and La Fondation Sciences Math\'ematiques de Paris for support and
hospitality during a sabbatical year, the Simons Foundation for
support via Collaboration Grant 713699, and the participants in the
IHES seminar ``Équations diff\'erentielles motiviques et au–del\`a'' for
questions which stimulated this work.  The second author thanks the
Marsden Fund administered by the Royal Society of New Zealand for
financial support.

\section{$p$-descent and tangencies}\label{s:p-descent}
Let $k$ be a field of characteristic $p$ (algebraically closed and
with $p>3$ for convenience).  Let $\CC$ be a smooth, irreducible,
projective curve over $k$, and let $K=k(\CC)$. Let $E/K$ be an
elliptic curve with $j$-invariant $j\not\in K^p$,
and let $P\in E(K)$ be a point of infinite order.

Let $\EE\to\CC$ be the N\'eron model of $E/K$ (as a group scheme) with
zero section $O$.  Let $\omega$ be the line bundle
$O^*\left(\Omega^1_{\EE/\CC}\right)$.  It is well known that if
\begin{equation}\label{eq:model}
 y^2=x^3+a_4x+a_6\qquad a_4,a_6\in K 
\end{equation}
is a short Weierstrass model of $E$, then $dx/2y$ defines a global
section of $\omega$, and that $a_4(dx/2y)^4$ and $a_6(dx/2y)^6$ are
well-defined global sections of $\omega^4$ and $\omega^6$ respectively
which are independent of the choice of short Weierstrass model.  If
$(x(P),y(P))$ are the coordinates of a point $P\in E(K)$ in the model
\eqref{eq:model}, then $x(P)(dx/2y)^2$ and $y(P)(dx/2y)^3$ are global
sections of $\omega^2$ and $\omega^3$ respectively, again independent
of the model.  More generally, a homogeneous polynomial of weight
$\kappa$ in $x(P),y(P),a_4,a_6$ (with weights 2, 3, 4, and 6
respectively) gives rise to a well-defined global section of
$\omega^\kappa$.  In order to compute the order of vanishing of such a
section at $t\in\CC$, we choose the model \eqref{eq:model} with $a_4$
and $a_6$ regular of minimal valuation at $t$ and then take the
valuation of the element of $K$ obtained by evaluating the polynomial
using these coordinates.

Define polynomials $M$, $A$, and $L$ in $a_4$, $a_6$, and $x$ by
\[\left(x^3+a_4x+a_6\right)^{(p-1)/2}=x^pM(x)+Ax^{p-1}+L(x)\]
where $L(x)$ consists of terms of degrees $<p-1$ in $x$.  Then $A$,
the Hasse invariant, is homogeneous of degree $p-1$ in $a_4$ and $a_6$
(and is independent of $x$), and $M(x)$ is homogeneous of degree
$p-3$.  Thus $A$ and $M$ give rise to sections of $\omega^{p-1}$ and
$\omega^{p-3}$ respectively.

There is a twisted differential on $\CC$ associated to $E$ which is
mentioned as $dt/t$ in \cite[\S3]{Voloch90}, considered more
explicitly and denoted $dq/q$ in \cite[\S5]{Ulmer91}, and calculated
in terms of a Weierstrass model and denoted $\omega_q$ in
\cite[\S4]{Broumas97}.  To avoid conflicting with other notation here,
we call it $\lambda$.  The formula of \cite{Broumas97} for it is:
\begin{equation}\label{eq:omega_q}
\lambda=\frac{a_4}{18a_6}\frac{dj}{j}.
\end{equation}
Since $\lambda$ is a 1-form times a homogeneous expression in $a_4$ and
$a_6$ of weight $-2$, $\lambda(dx/2y)^{-2}$ is a well-defined rational
section of $\Omega^1_\CC\tensor\omega^{-2}$ independent of the choice
of model.

\begin{lemma}\label{lemma:tau}
  Let $t$ be a point of  $\CC$ and let
  \[\ell=\ord_t\left(\lambda(dx/2y)^{-2}\right)\]
  where $\ord_t$ is as a section of
  $\Omega^1_\CC\tensor\omega^{-2}$.  Then we have
\begin{center}
\renewcommand{\arraystretch}{1.8}
\begin{tabular}{| c | c | c | c | c | c | c | c | c | c | c | c |}
\hline
  \text{Reduction at $t$}
  &$\RN{1}_0$&$\substack{\displaystyle\RN{1}_m\\(p| m)}$
  &$\substack{\displaystyle\RN{1}_m\\(p\nodiv m)}$
  &$\RN{2}$&$\RN{3}$&$\RN{4}$
  &$\substack{\displaystyle\RN{1}^*_m\\(p| m)}$
  &$\substack{\displaystyle\RN{1}^*_m\\(p\nodiv m)}$
  &$\RN{4}^*$&$\RN{3}^*$&$\RN{2}^*$\\
  \hline
  $\ell$
  &$\ge0$&$\ge0$&$=-1$
  &$\ge-1$&$\ge-1$&$\ge-1$
  &$\ge-1$&$=-2$
  &$\ge-2$&$\ge-2$&$\ge-2$\\
  \hline
\end{tabular}
\end{center}
\end{lemma}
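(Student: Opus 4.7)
The plan is to work locally at $t$: pick a short Weierstrass model~\eqref{eq:model} which is minimal at $t$, so that $dx/2y$ generates $\omega$ near $t$. Under the trivialization of $\omega^{-2}$ by $(dx/2y)^{-2}$, $\ell$ is simply $\ord_t(\lambda)$ with $\lambda$ regarded as a rational $1$-form on $\CC$. Using $j=\text{const}\cdot c_4^3/\Delta$ with $c_4=-48a_4$ gives $dj/j=3\,da_4/a_4-d\Delta/\Delta$, and substituting $d\Delta=-192a_4^2\,da_4-864a_6\,da_6$ together with the identity $\Delta+64a_4^3=-432a_6^2$ converts $\lambda=(a_4/18a_6)(dj/j)$ into the symmetric form
\[
\lambda=\frac{24}{\Delta}\bigl(2a_4\,da_6-3a_6\,da_4\bigr).
\]
The task is now to evaluate $\ord_t$ of the right-hand side using the valuations of $a_4$, $a_6$, $\Delta$ that Tate's algorithm prescribes for each Kodaira type in characteristic~$>3$.

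For each of $\RN{1}_0,\RN{2},\RN{3},\RN{4},\RN{4}^*,\RN{3}^*,\RN{2}^*$, the pair $(\ord_t(a_4),\ord_t(a_6))$ is pinned down precisely enough that the trivial estimates $\ord_t(df)\ge\ord_t(f)-1$ applied to $da_4$ and $da_6$, combined with the tabulated $\ord_t(\Delta)$, produce the claimed lower bound in a single line. For instance, $\RN{4}^*$ has $\ord_t(a_4)\ge 3$, $\ord_t(a_6)=4$, $\ord_t(\Delta)=8$, so $\ord_t(a_4\,da_6)$ and $\ord_t(a_6\,da_4)$ are both $\ge 6$, whence $\ell\ge -2$; for $\RN{1}_0$ we have $\ord_t(\Delta)=0$ with $a_4,a_6$ regular at $t$, forcing $\ell\ge 0$.

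The only nontrivial cases are $\RN{1}_m$ and $\RN{1}^*_m$, where the answer depends on whether $p\mid m$. Write $a_4=u^r\alpha$, $a_6=u^s\beta$ with $\alpha,\beta\in\mathcal{O}_{\CC,t}^\times$, where $(r,s)=(0,0)$ for $\RN{1}_m$ and $(r,s)=(2,3)$ for $\RN{1}^*_m$ (the degenerate sub-cases of $\RN{1}^*_0$ with strictly larger $r$ or $s$ fall under the crude estimates of the previous paragraph). A direct expansion shows that in the $\RN{1}^*_m$ case the naive leading $u^4$-term of $2a_4\,da_6-3a_6\,da_4$ cancels exactly (the coefficients $2$ and $3$ are matched against $s=3$ and $r=2$ so that $2s=3r$), giving
\[
2a_4\,da_6-3a_6\,da_4=u^5(2\alpha\beta'-3\beta\alpha')\,du,
\]
while in the $\RN{1}_m$ case one simply gets $(2\alpha\beta'-3\beta\alpha')\,du$. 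To control the remaining factor $2\alpha\beta'-3\beta\alpha'$ I will use the algebraic identity
\[
2\alpha^2(2\alpha\beta'-3\beta\alpha')=\beta'(4\alpha^3+27\beta^2)-3\beta(2\alpha^2\alpha'+9\beta\beta'),
\]
together with $d(4\alpha^3+27\beta^2)=6(2\alpha^2\alpha'+9\beta\beta')\,du$ and $\ord_t(4\alpha^3+27\beta^2)=m$ (precisely the contribution to $\ord_t(\Delta)$ beyond the universal $0$ or $6$). Since $\ord_t\bigl(d(4\alpha^3+27\beta^2)\bigr)$ equals $m-1$ when $p\nmid m$ and is $\ge m$ when $p\mid m$, the identity forces $\ord_t(2\alpha\beta'-3\beta\alpha')$ to equal $m-1$ or to be $\ge m$ respectively. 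Combining with $\ord_t(\Delta)=m$ (resp.\ $6+m$) yields exactly the four entries in the $\RN{1}_m$ and $\RN{1}^*_m$ columns.

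The main obstacle is spotting the two successive cancellations that make the $\RN{1}^*_m$ analysis work: first the exact vanishing of the $u^4$-term in $2a_4\,da_6-3a_6\,da_4$ which produces the extra factor $u^5$, and then the reduction of $2\alpha\beta'-3\beta\alpha'$ to a combination of $4\alpha^3+27\beta^2$ and its derivative, which is what makes the divisibility $p\mid m$ visible. The hypothesis $j\notin K^p$ is what guarantees $d(4\alpha^3+27\beta^2)\ne 0$, so the derivative analysis is not vacuous.
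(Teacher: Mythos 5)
Your proposal is correct and is essentially the paper's argument carried out in full: the paper's proof simply deduces the table from \eqref{eq:omega_q} together with the standard Tate-algorithm valuations of $a_4$, $a_6$, $\Delta$, $dj$ for each Kodaira type, which is exactly the computation you perform (your rewriting $\lambda=\tfrac{24}{\Delta}(2a_4\,da_6-3a_6\,da_4)$ and the identity isolating $d(4\alpha^3+27\beta^2)$ are just an explicit way of tracking $\ord_t(dj)$ and the $p\mid m$ dichotomy). All the case checks, including the $u^5$ cancellation for $\RN{1}^*_m$ and the degenerate sub-cases of $\RN{1}^*_0$, come out as you claim.
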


\begin{proof}
  This can be deduced from the expression \eqref{eq:omega_q} using a
  table (such as those in \cite[p.~540]{MirandaPersson86} or
  \cite[p.~41]{MirandaBTES}) relating the valuations of $a_4$, $a_6$,
  and $dj$ to reduction types.  Alternatively, one may use the
  calculation of the divisor of $dq/q$ in \cite[\S7]{Ulmer91} on the
  Igusa curve and the diagram in Remark~\ref{rem:triangle} below.
\end{proof}

\begin{rem}
  If $\lambda$ is calculated with respect to a model of $E$ with $A=1$,
  the resulting differential is logarithmic.  This is the inspiration
  for the notations $dq/q$ and $dt/t$.  We do not use them here since in
  general $\lambda$ is not logarithmic.
\end{rem}

There is an injective homomorphism $\mu:E(K)/pE(K)\to K$ defined in
\cite{Voloch90} and computed explicitly in terms of a model in
\cite{Broumas97}, where the main theorem says:
\begin{equation}\label{eq:Broumas}
  \mu(P)=y(P)M(x(P))+
  \wp_A\left(\frac{dx(P)/2y(P)}{\lambda}
    -\frac{12x(P)^2+\left(\frac{d\Delta/\Delta}{\lambda}\right)x(P)
      +8 a_4}{12y(P)}\right).
\end{equation}
Here $\wp_A(z)=z^p-Az$ and $\Delta$ is the discriminant of
\eqref{eq:model}.

As is visible from the explicit formula, $\mu(P)$ is a homogeneous
rational function of degree $p$, so it gives rise to a rational
section of $\omega^p$.  Let $\nu(P):=\mu(P)\lambda(dx/2y)^{p-2}$.
This is a rational section of $\Omega^1_{\CC}\tensor\omega^{p-2}$
which is independent of the model.

Let $D_0$ and $D_\infty$ be the divisors of zeroes and poles of
$\lambda(dx/2y)^{-2}$ respectively, and define $D'$ to be the sum of
the points $t$ where $E$ has $I_m$ reduction with $m>0$ and the
order of $P$ in the component group at $t$ is divisible by $p$. (This
implies that $p|m$.)  Let
\[D=(p-1)D_0+pD'.\]
For a positive integer $n$, let $(nP.O)_t$ be the local intersection
number between $nP$ and the zero section $O$ over $t\in\CC$. 

\begin{prop}\label{prop:bound1}
  Suppose that $E$ has everywhere semi-stable reduction.  Then
  \[\nu(P)\in H^0(\CC,\Omega^1_\CC\tensor\omega^{p-2}(D)).\]
  Moreover, if for some positive integer $n$ prime to $p$ we have
  $\iota:=(nP.O)_t>1$, then
  \[\ord_t(\nu(P))\ge
    -\ord_t(D)+
    \begin{cases}
      p&\text{if $t|D$}\\
      1&\text{if $t\nodiv D$}
    \end{cases}
  \]
  \textup{(}Here $\ord$ is as a section of
  $\Omega^1_\CC\tensor\omega^{p-2}$\textup{)}.  Moreover, if $E$ has
  multiplicative reduction at $t$ and $t|D$, then the same inequality
  holds already when $(nP.O)_t>0$.
\end{prop}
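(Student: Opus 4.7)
My plan is a local analysis at each $t\in\CC$ using the explicit Broumas formula \eqref{eq:Broumas}. The weight homogeneity of $\mu(P)$ in the coordinates already establishes that $\nu(P) = \mu(P)\lambda(dx/2y)^{p-2}$ is a rational section of $\Omega^1_\CC\otimes\omega^{p-2}$; the content is to bound its divisor of poles and to exhibit the extra vanishing forced by tangency.

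For the first assertion, I would go through the possible reduction types of $E$ at $t$ using a minimal short Weierstrass model, compute $\ell$ via Lemma \ref{lemma:tau}, and bound the order of each summand of \eqref{eq:Broumas}. At good reduction we have $\ell = 0$ and each term of $\mu(P)(dx/2y)^p$ is manifestly a regular section of $\omega^p$. At $\RN{1}_m$ reduction with $p\nodiv m$ and $P$ in the identity component (so $t\nodiv D$), we have $\ell = -1$ and a cancelling simple zero must appear in $\mu(P)(dx/2y)^p$. At the remaining points included in $D$, the pole bound $(p-1)D_0 + pD'$ reflects two effects: the Frobenius piece of $\wp_A$ worsens the pole by a factor of $p$ at the zeroes of $\lambda(dx/2y)^{-2}$, giving the coefficient $(p-1)$ on $D_0$; and the component-group obstruction at $\RN{1}_m$ with $p|m$ or with $P$ in a nontrivial component gives the coefficient $p$ on $D'$.

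For the tangency inequality, I would use that $\mu$ is a homomorphism on $E(K)/pE(K)$, so for $n$ prime to $p$ we have $\mu(nP) = n\mu(P)$ in $K$ and thus $\ord_t\mu(nP) = \ord_t\mu(P)$. Substituting the coordinates of $nP$ into \eqref{eq:Broumas}, which near $O$ satisfy $\ord_t x(nP) = -2\iota$ and $\ord_t y(nP) = -3\iota$, I track the orders of each summand. Writing $Z$ for the argument of $\wp_A$, the two terms $y(nP)M(x(nP))$ and $Z^p$ both have leading poles of order $p\iota$; the hypothesis $\iota\ge 2$ forces their leading Laurent coefficients to cancel via the defining identity for $M$ and the Weierstrass equation, producing the claimed extra vanishing of order $1$ at $t\nodiv D$ and of order $p$ at $t|D$. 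The multiplicative-reduction refinement allowing $\iota > 0$ when $t|D$ follows from a parallel analysis in a Tate-style local chart at an $\RN{1}_m$ fiber, where the local N\'eron-model structure provides extra rigidity.

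The main obstacle will be this leading cancellation between $Z^p$ and $y(nP)M(x(nP))$: because $Z^p$ is a $p$-th power its leading Laurent expansion at $t$ is rigid, and extracting the cancellation requires a careful expansion using the defining identity $(x^3+a_4x+a_6)^{(p-1)/2} = x^p M(x) + A x^{p-1} + L(x)$ together with the expansions of $x(nP)$ and $y(nP)$ derived from the formal group at $O$. This is where the interplay between the formal-group structure of $E$ and the explicit form of Manin's map becomes essential.
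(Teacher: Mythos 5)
Your overall route is the same as the paper's: work place by place with a minimal model, use additivity of $\mu$ modulo $p$ to replace $P$ by $nP$, analyze the summands of \eqref{eq:Broumas} via the defining identity of $M$, $A$, $L$, and treat multiplicative places through the Tate parameterization. But two steps are genuinely missing. The central one is the cancellation. You say that $\iota\ge2$ forces the \emph{leading} Laurent coefficients of $y(nP)M(x(nP))$ and $Z^p$ to cancel; cancelling only leading coefficients of two poles of order $p\iota$ still leaves a pole of order $p\iota-1$, which is useless. What is needed is an exact identity, valid for every $\iota\ge1$ and not triggered by the tangency hypothesis: from $y^{p-1}=x^pM(x)+Ax^{p-1}+L(x)$ one gets $yM(x)=\wp_A(y/x)+yL(x)/x^p$, and since $\wp_A$ is additive and $\frac{y}{x}=\frac{x^2}{y}+\frac{2a_4}{3y}+\frac{a_4x+3a_6}{3xy}$, the combination $y(P)M(x(P))-\wp_A\bigl(\tfrac{x(P)^2}{y(P)}+\tfrac{2a_4}{3y(P)}\bigr)$ equals $\wp_A\bigl(\tfrac{a_4x(P)+3a_6}{3x(P)y(P)}\bigr)-\tfrac{y(P)L(x(P))}{x(P)^p}$, which is visibly of order $\ge\iota$ --- a cancellation through roughly $(p+1)\iota$ coefficients. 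Moreover, this is not where the conclusion comes from: that block contributes $\ge\iota$ already for $\iota=1$. The hypothesis $\iota>1$ enters only through the remaining summands $\wp_A\bigl(\tfrac{dx(P)/2y(P)}{\lambda}\bigr)\lambda$ and the $d\Delta/\Delta$ term, where $\ord_t(dx(P)/2y(P))\ge\iota-1\ge1$ yields the stated $-\ord_t(D)+1$, improved to $+p$ where $\lambda$ vanishes because the Frobenius part of $\wp_A$ multiplies $\iota-1$ by $p$. Your sketch never isolates these terms, so the actual role of $\iota>1$ and the dichotomy $t|D$ versus $t\nodiv D$ are not established.

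The second gap is the multiplicative case, where ``a parallel analysis with extra rigidity from the N\'eron model'' is not an argument. The paper's proof uses the specific local formulas $A=1$, $\lambda=dq/q$, and $\mu(P)=\wp\bigl(\tfrac{du/u}{\lambda}\bigr)$ for $P$ the class of $u\in K_t^\times$ (from \cite{Voloch90}); the refinement allowing $(nP.O)_t>0$ when $t|D$ rests on the observation that $nP$ specializing into the identity component forces $u$ to be a unit, so $du/u$ is regular and $\wp(du/u/\lambda)\lambda$ has order at least $-(p-1)\ord_t(\lambda)$, which is the required bound. Without these two computations your plan is a correct outline of the paper's strategy rather than a proof.
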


\begin{proof}
  We work with one closed point $t\in\CC$ at a time and choose a model
  \eqref{eq:model} with $a_4$ and $a_6$ regular and minimal at $t$.

  First consider the case where $E$ has good reduction at $t\in \CC$
  and $P(t)$ is not the point at infinity.  Lemma~\ref{lemma:tau}
  shows that $\lambda$ is regular at $t$.  Also, $y(P)$ vanishes at $t$
  if and only if $P(t)$ is a point of order 2.  In this case, we
  replace $P$ by $2P$ and deal with this case just below.  (Since
  $p\neq2$, $\nu(P)$ and $\nu(2P)$ have the same valuation at $t$.)
  Thus we may assume that $x(P)$ and $y(P)$ are finite at $t$ and
  $y(P)$ does not vanish there.  It is then visible from the formula
  \eqref{eq:Broumas} that $\nu(P)$ is regular unless $\lambda$ vanishes,
  and more generally
  \[\ord_t(\nu(P))\ge-(p-1)\ord_t(\lambda)=-\ord_t(D).\]

  Next consider the case where $E$ has good reduction at $t$ and
  $\iota:=(nP.O)_t>0$ for some positive $n$ prime to $p$.  We may
  replace $P$ with $nP$ without affecting the valuation of $\nu(P)$.
  Doing so, $x(P)$ and $y(P)$ have poles at $t$ of orders $2\iota$ and
  $3\iota$ respectively.
  It follows from the definition of $A$, $M$, and $L$ that
  \begin{align*}
    \ord_t&\left(y(P)M(x(P))-\wp_A\left(\frac{x(P)^2}{y(P)}
            +\frac{2a_4}{3y(P)}\right)\right)\\
    &=\ord_t\left(\wp_A\left(\frac{a_4x(P)+3a_6}{3x(P)y(P)}\right)
            -\frac{y(P)L(x(P))}{x(P)^p}\right)\\
    &\ge\iota.    
  \end{align*}
Moreover,
  $\ord_t(dx(P)/2y(P))\ge\iota-1$, so we have
  \[\ord_t\left(\wp_A\left(\frac{dx(P)/2y(P)}{\lambda}\right)\lambda\right)
    \ge -\ord_t(D)+
    \begin{cases}
      p&\text{if $t|D$}\\
      1&\text{if $t\nodiv D$}.
    \end{cases}\]
Also,
\[\ord_t\left(\wp_A\left(\frac{(\frac{d\Delta/\Delta}{\lambda})x(P)}
      {12y(P)}\right)\lambda\right)
      \ge -\ord_t(D)+
    \begin{cases}
      p&\text{if $t|D$}\\
      1&\text{if $t\nodiv D$}.
    \end{cases}\]
Combining the displayed lower bounds yields the
  proposition at places where $E$ has good reduction.

 Now consider the case where $E$ has multiplicative reduction at $t$,
 say of type $I_m$.  We use the Tate parameterization of $E$ over the
 completion $K_t$, so that $E(K_t)\cong K_t^\times/q^\Z$ where $q\in
 K_t^\times$ has valuation $m$.  In this case $A=1$ and
 \cite[\S6]{Voloch90} shows that $\lambda=dq/q$ and
 \[\mu(P)=\wp\left(\frac{du/u}{\lambda}\right),\]
 where $P$ is the class of $u\in K_t^\times$.  If $p\nmid m$, we may
 replace $P$ by $mP$ and conclude that $u$ is a unit, so
 $\ord_t(du/u)\ge0$. It follows that
 \[\ord_t(\nu(P))\ge
   \begin{cases}
     -p-(p-1)\ord_t(\lambda)&\text{if $p|m$ (so $\ord_t(\lambda)\ge0$),}\\
     0&\text{if $p\nmid m$ (which implies $\ord_t(\lambda)=-1$).}
   \end{cases}\]
If $\iota=(nP.O)_t>1$, then 
\[\ord_t(\nu(P))\ge\min\left(p(\iota-1)-(p-1)\ord_t(\lambda),
    \iota-1\right).\]
If $p|m$ and $(nP.O)_t>0$ for some $n$ prime to $p$, then $du/u$ is
regular and
\[\ord_t(\nu(P))\ge -(p-1)\ord_t(\lambda)=-\ord_t(D)+p.\]
This completes the proof of the proposition.
\end{proof}

\begin{rem}\label{rem:bounds}
  The careful reader will note that the inequalities of
  Proposition~\ref{prop:bound1} can be made more precise as a function
  of $\iota$.  Indeed, one finds that if for some positive integer $n$
  prime to $p$ we have $\iota:=(nP.O)_t>0$, then
  \[\ord_t(\nu(P))\ge
    \min\left(p(\iota-1)-\ord_t(D),\iota-1+\ord_t(A)\right)\]
  Also, a similar analysis, but with many more
  cases, can be carried out at places of additive reduction.  We
  have not tried to optimize here because we will prove a stronger
  result in the next section.
  \end{rem}

We can use this result to bound tangencies between $nP$ and the zero
section: 
\begin{cor}\label{cor:bound1}
  Suppose that $E$ has everywhere semi-stable reduction and that for
  every point $t\in\CC$, $p$ does not divide the order of the
  component group at $t$.  Let
  \[T=\left\{t\in\CC\left.\mid (nP.O)_t>1\text{ for some $n$ prime to
          $p$}\right.\right\},\]
    \[T_s=\left\{t\in T\left.\mid \ord_t(\lambda)>0\right.\right\}\]
    and $T_o=T\setminus T_s$.  Then
    \[|T|\le |T_o|+p|T_s|\le p(2g-2-d)+(p-1)\delta,\] %
    where $g$ is the genus of $\CC$, $d=\deg(\omega)$, and $\delta$ is
    the number of points where $E$ has bad reduction.
  \end{cor}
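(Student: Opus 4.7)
The plan is to convert the local inequalities of Proposition~\ref{prop:bound1} into a global bound by viewing $\nu(P)$ as a nonzero section of the line bundle $\Omega^1_\CC \otimes \omega^{p-2}(D)$ and comparing the pointwise contributions at $T$ against the total degree. The inequality $|T| \le |T_o| + p|T_s|$ is trivial from $p \ge 2$, so the substance is showing $|T_o| + p|T_s| \le p(2g-2-d) + (p-1)\delta$.

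First I would simplify $D$. Under the hypothesis that $p$ does not divide the order of any component group, the divisor $D'$ vanishes (no element of a group of order prime to $p$ has order divisible by $p$), so $D = (p-1)D_0$. In particular, for $t \in T$ the dichotomy ``$t \mid D$ vs.\ $t \nmid D$'' of Proposition~\ref{prop:bound1} matches exactly ``$t \in T_s$ vs.\ $t \in T_o$'', since $T_s = T \cap \mathrm{supp}(D_0)$.

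Next, assuming $\nu(P) \ne 0$ (arranged by replacing $P$ with a multiple outside $pE(K)$ if necessary; otherwise $\mu(P) = 0$ and the pointwise bounds become vacuous), I would treat $\mathrm{div}(\nu(P)) + D$ as an effective divisor whose total degree is $2g - 2 + (p-2)d + (p-1)\deg(D_0)$. From Proposition~\ref{prop:bound1} we have $\ord_t(\nu(P)) + \ord_t(D) \ge p$ for $t \in T_s$ and $\ge 1$ for $t \in T_o$; the remaining contributions are non-negative. Summing yields
\[
p|T_s| + |T_o| \le 2g - 2 + (p-2)d + (p-1)\deg(D_0).
\]

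Finally, I would compute $\deg(D_0)$. The divisor of $\lambda(dx/2y)^{-2}$ in $\Omega^1_\CC \otimes \omega^{-2}$ is $D_0 - D_\infty$, of total degree $2g-2-2d$. Under the corollary's hypotheses, Lemma~\ref{lemma:tau} forces every bad fibre to be of type $I_m$ with $p \nmid m$ and to contribute $\ell = -1$, so $\deg(D_\infty) = \delta$ and $\deg(D_0) = 2g-2-2d+\delta$. Substituting and simplifying reduces the right-hand side to $p(2g-2-d) + (p-1)\delta$, completing the bound. There is no serious obstacle here: the proposition does the hard local work, and the global step is a bookkeeping calculation in degrees once $D' = 0$ has been noted.
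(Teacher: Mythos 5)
Your proposal is correct and follows essentially the same route as the paper: identify $D=(p-1)D_0$ under the hypotheses, view $\nu(P)$ as a global section of $\mathcal{L}=\Omega^1_\CC\otimes\omega^{p-2}(D)$ of degree $p(2g-2-d)+(p-1)\delta$, and sum the local lower bounds ($\ge p$ at $T_s$, $\ge 1$ at $T_o$) from Proposition~\ref{prop:bound1} against that degree. Your explicit remarks that $D'=0$ and that one must ensure $\nu(P)\neq 0$ are sound and, if anything, slightly more careful than the printed proof.
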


  \begin{proof}
    Under the hypotheses, Lemma~\ref{lemma:tau} yields that $\lambda$ has
    $\delta$ simple poles, so its divisor of zeroes has degree
    $2g-2-2d+\delta$ and 
\[\deg(D)=(p-1)(2g-2-2d+\delta).\]
    The degree of the divisor of zeroes of $\nu(P)$ as a section of
    $\LL:=\Omega^1_\CC\tensor\omega^{p-2}(D)$ is $\deg(\LL)$ and one
    computes that
    \[\deg(\LL)=2g-2+(p-2)d+\deg(D)=p(2g-2-d)+(p-1)\delta.\]
    On the other hand, if $\iota=(nP.O)_t>1$, then using
    Proposition~\ref{prop:bound1} we find that as a section of $\LL$,
    $\ord_t(\nu(P))\ge1$ if $t\in T_o$ and $\ord_t(\nu(P))\ge p$ if
    $t\in T_s$.  This yields the stated inequality.
  \end{proof}

  \begin{s-example} (Igusa curves) Choose an integer $N>3$ and prime
    to $p$ and let $\CC=\Ig_1(N)$ be the Igusa curve with level $N$
    structure.  (Since $p>3$, $\Ig_1(N)$ is a fine moduli space whose
    points parameterize isomorphism classes of generalized elliptic
    curves $X$ together with a point of order $N$ on $X$ and a point
    of order $p$ on $X^{(p)}$.)  Let $\pi:\EE\to\Ig_1(N)$ be the
    universal curve, $K=k(\Ig_1(N))$, and $E/K$ the generic fiber of
    $\pi$.  Then $E$ has everywhere semistable reduction and the
    orders of its component groups divide $N$ so
    Corollary~\ref{cor:bound1} applies.  Using results from
    \cite{Ulmer91}, we can write the upper bound there in another
    form: Let $\sigma$ be the number of supersingular points on
    $\Ig_1(N)$.  Then we have
    \[d=\deg(\omega)=\sigma
      \and
      2g-2=p\sigma-\delta,\]
    and the divisor of $\lambda$ is
    \[\dvsr(\lambda)=(p-2)(ss)-(cusps),\]
    where $(ss)$ is the divisor of supersingular points and $(cusps)$
    is the divisor of cusps.
    Thus the upper bound in Corollary~\ref{cor:bound1} is
\[p(2g-2-d)+(p-1)\delta= p(p-1)\sigma-\delta.\]
    Moreover, the set $T$ of tangencies decomposes into
   \[T_s=\left\{t\in T\left.\mid\text{ $t$
           supersingular}\right.\right\}
     \and
    T_o=\left\{t\in T\left.\mid\text{ $t$
           ordinary or cuspidal}\right.\right\}.
     \]
 (Here we say $t$ is supersingular (resp. ordinary or cuspidal) if the
 elliptic curve associated to $t$ has the corresponding properties.)
 Corollary~\ref{cor:bound1} then says that
 \[|T_o|+p|T_s|\le p(p-1)\sigma-\delta.\]
  \end{s-example}

  \begin{s-example}
    Taking $p=3$ and $N=5$ in the previous example (which is
    technically not included but nevertheless relevant), the curve
    $\CC=Ig_1(5)$ in characteristic 3 is rational, and one checks that
    the universal curve $E$ has $\sigma=2$ points of supersingular
    reduction, $\delta=8$ points of multiplicative reduction, and has
    good reduction elsewhere.  One also checks that $E(K)$ has positive
    rank, and for a certain non-torsion point $P$, a version of
    Broumas's formula valid in characteristic 3 allows one to compute
    that $\nu(P)$ has double poles at the 2 supersingular points, and
    so at most 4 zeroes.  Moreover, one finds that $\nu(P)$ has simple
    zeroes at four distinct, ordinary points, and it turns out that
    suitable multiples of $P$ are tangent to the zero section over
    each of these points.  We are therefore in a situation where the
    upper bound of Corollary~\ref{cor:bound1} is tight.
  \end{s-example}

  \begin{rem}\label{rem:triangle}
    One may relate the general case to the Igusa case as follows:
    Choose $N>3$ and prime to $p$.  Suppose $K=k(\CC)$.
    Given $E/K$, let $\tilde K=k(\tilde\CC)$ be an extension of $K$ where $E$
    acquires a point of order $N$ and an Igusa structure.  This data
    induces a morphism $\tilde\CC\to\Ig_1(N)$ and we have a diagram
    \begin{equation}\label{eq:triangle}
    \xymatrix{
    &\ar[dl]_{\pi_1}\ar[dr]^{\pi_2}\tilde\CC&\\
    \CC&&\Ig_1(N)}
\end{equation}
where the pull back of the universal curve over $\Ig_1(N)$ to
$\tilde\CC$ is isomorphic to the pull back of $\EE$ to $\tilde\CC$. 
One may choose $\tilde\CC$ so that $\pi_1$ is at worst tamely
ramified, and since the universal curve over $\Ig_1(N)$ is everywhere
semistable, $\pi_2$ has the property that the pull back of the bundle
$\omega$ on $\Ig_1(N)$ to $\tilde\CC$ is the bundle $\omega$ on
$\tilde\CC$ given by the recipe at the beginning of this section.
\end{rem}

\begin{rem}
    We discuss another approach to Proposition~\ref{prop:bound1}
    when $E$ has additive reduction.  In the following table, for a
    given reduction type at $t$, we record $e$, the ramification index
    at $t$ of an extension $\pi:\tilde\CC\to\CC$ over which $E$
    acquires semi-stable reduction, and $f$, the change in the bundle
    $\omega$ (i.e., the integer such that
    $\pi^*\omega_{\CC}\cong\omega_{\tilde\CC}(f\cdot t)$ near $t$).

  \begin{center}
\renewcommand{\arraystretch}{1.8}
\begin{tabular}{| c | c | c | c | c | c | c | c |}
\hline
  Reduction&$\RN{2}$&$\RN{3}$&$\RN{4}$&$\RN{1}_m^*$
  &$\RN{4}^*$&$\RN{3}^*$&$\RN{2}^*$\\
  \hline
  $e$&6&4&3&2&3&4&6\\
  \hline
  $f$&1&1&1&1&2&3&5\\
   \hline
\end{tabular}
\end{center}
\medskip
Applying the bound of Remark~\ref{rem:bounds} to $\pi^*\nu(P)$ we
find
\begin{multline*}
  e\ord_t(\nu(P))+(e-1)-f(p-2)\\
  \ge\min\left( p(e\iota-1)-\ord_t(\tilde D),
    e\iota-1+e\ord_t(A)-f(p-1)\right),  
\end{multline*}
where
\begin{align*}
\ord_t(\tilde D)=
  &(p-1)(e\ord_t(\dvsr_0(\lambda))+e-1+2f)\\
  &+
  \begin{cases}
    ep&\text{if $E$ has $\RN{1}_m^*$ reduction with $m>0$ and $p|m$}\\
    0&\text{otherwise.}
  \end{cases}  
\end{align*}
A bit of algebra shows that this lower bound on $\ord_t(\nu(P))$ is
slightly worse than in the case of semi-stable reduction.
\end{rem}

\section{A more general class of tangencies}
\label{s:general}
One of the main lessons of \cite{UlmerUrzua21} is that considering a
more general class of tangencies (with general leaves of the Betti
foliation rather than just torsion leaves) leads to stronger results
for the weighted number of tangencies (even an exact formula in the
complex case).  In this section we consider an analog of the Betti
foliation and use it to again deduce a bound on the number of
tangencies.

To motivate the discussion, note that if $(nP.O)_t>0$ then $P(t)$ is
an $n$-torsion point, and $(nP.O)_t$ is also the intersection number
over $t$ of $P$ and the $n$-torsion multisection passing through
$P(t)$.  Since $n$-torsion multisections are $p$-multiples of other
$n$ torsion multisections, they are killed by $\nu$, so it is natural
that intersections with them give bounds on $\ord_t(\nu(P))$.

However, if $P(t)$ is not an $n$-torsion point for $n$ prime to $p$,
then there may be not be a global point or multisection $R$ such that
$pR(t)=P(t)$.  We remedy this by considering \emph{local} points $R$
with $pR(t)=P(t)$ and use them to bound $\ord_t(\nu(P))$.  This leads
to a significant strengthening of Corollary~\ref{cor:bound1}.

\subsection{}
In order to focus on the essential ideas, we work over the Igusa
curve in characteristic $>3$.
For the rest of this section, let $k$ be an algebraically closed field
of characteristic $p>3$, fix an integer $N>3$ and prime to $p$, and
let $\CC=\Ig_1(N)$ be the Igusa curve parameterizing generalized
elliptic curves with Igusa structures of level $p$ and $\Gamma_1(N)$
structures. Let $K=k(\CC)$, let $\pi:\EE\to\CC$ be the N\'eron model
of the universal curve (as a group scheme, so $\pi$ is smooth but not
proper), and let $E/K$ be the generic fiber of $\pi$.  By definition,
there is a non-trivial point $Q_0\in E^{(p)}(K)$ of order $p$, and we
fix a choice of one such point.

Given a model of $E$ as at \eqref{eq:model}, let $x'$ and $y'$ be the
coordinates on $E^{(p)}$ satisfying 
    \[ y^{\prime\,2}=x^{\prime\,3}+a_4^px'+a_6^p,\] %
and let $\pi':\EE'\to\CC$ be the N\'eron model of $E^{(p)}$.  Let
$F:E\to E^{(p)}$ be the relative Frobenius, and let $V:E^{(p)}$ be
the Verschiebung, so that $VF=p_E$.  We use $F$ and $V$ also to
denote the corresponding morphisms between $\EE$ and $\EE'$.

\subsection{}
We recall another version of the formula of Broumas which is
well-adapted to the context of the Igusa curve.  As usual, let
$\omega=O^*(\Omega^1_{\EE/\CC})$ and recall that the Hasse invariant
$A$ naturally determines a section of $\omega^{p-1}$.  Since we are on
the Igusa curve, $A=\alpha^{p-1}$ where $\alpha(dx/2y)$ is a section
of $\omega$, and as explained in \cite[\S3.2]{Broumas97}, the choice
of $Q_0$ determines a specific choice of $\alpha$.  Let $\lambda$ be
the rational section of $\Omega^1_{\CC}\tensor\omega^{-2}$ discussed
in the previous section.

With these notations, the explicit formula of Broumas takes an
especially simple form over the Igusa curve:
\begin{equation}\label{eq:Broumas2}
\lambda=\alpha^{p-2}\frac{dx'(Q_0)}{2y'(Q_0)}
  \and
  \mu(P)=\alpha^p\wp\left(
    \frac{dx'(Q)/2y'(Q)}{dx'(Q_0)/2y'(Q_0)}\right)  
\end{equation}
where $Q$ is an algebraic point of $E^{(p)}$ with $VQ=P$ (see
\cite[Thm.~4.1 and \S4.2]{Broumas97}).  Note in particular that the
kernel of $\mu$ on $E(K)$ (resp.~$E(K_t)$) is $pE(K)$
(resp.~$pE(K_t)$).

The divisors of $\alpha$ and $\lambda$ were computed in
\cite[\S7]{Ulmer91}.  (Note that the calculations there used a model
of $E$ with $A=1$, whereas here we use a minimal regular model.)
We have:
\begin{equation}\label{eq:divs1}
\dvsr(\alpha)=(ss)\and\dvsr(\lambda)=(p-2)(ss)-(cusps),  
\end{equation}
so the formula of Broumas for $\lambda$ yields
\begin{equation}\label{eq:divs2}
  \dvsr\left(dx'(Q_0)/2y'(Q_0)\right)=-(cusps).
\end{equation}

Fix a point $P\in E(K)$ of infinite order and not in $pE(K)$, and
write $P$ as well for the corresponding section of $\pi$.  Recall that
$\nu(P)=\mu(P)\lambda$.  Let $\OO_t$ be the completed local ring of
$\CC$ at $t$.

\begin{prop}\label{prop:local-points-and-bounds}\mbox{}
  \begin{enumerate}
  \item   Let $z\in\EE$ be a closed point, and let $t=\pi(z)$.  Then
    there is a section $R$ of $\pi$ over $\spec\OO_t$ such that $pR(t)=z$.
  \item If $R$ is a local section of $\pi$ over $\spec\OO_t$ with
    $pR(t)=P(t)$ and if $\iota=(pR,P)_t$ is the local intersection
    number, then we have
\[\ord_t(\nu(P))\ge
  \begin{cases}
    \iota-1&\text{if $t$ is not a supersingular point,}\\
    p(\iota-1)-(p-1)(p-2)
       &\text{if $t$ is supersingular and $\iota< p$,}\\
    \iota+p-2
       &\text{if $t$ is supersingular and $\iota\ge p$.}
  \end{cases}\]
  \end{enumerate}
\end{prop}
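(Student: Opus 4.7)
\emph{Part (1).}
Since $\pi\colon \EE\to\CC$ is smooth (as the N\'eron model of the universal generalized elliptic curve on $\Ig_1(N)$, which has everywhere semi-stable reduction), the reduction map $\EE(\OO_t)\to\EE_t(k)$ is surjective by formal smoothness, so it is enough to produce $R_0 \in \EE_t(k)$ with $[p]R_0 = z$. But $[p]$ is surjective on $\EE_t(k)$: on the identity component $\EE_t^\circ$ (a smooth connected commutative $1$-dimensional group over $k = \bar k$) this is automatic, and on the component group it follows because the component groups on $\Ig_1(N)$ have order dividing $N$, hence prime to $p$.

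\emph{Part (2).} Set $S := P - pR$, a local section of $\pi$ over $\spec\OO_t$. By hypothesis $S(t) = O(t)$, so $S$ lies in the formal group at $O$, and in any formal parameter there
\[
\iota \;=\; (pR,P)_t \;=\; (P - pR,\, O)_t \;=\; \ord_t(S).
\]
The Voloch map $\mu$ is given by an explicit rational formula in the coordinates of a point (see \eqref{eq:Broumas}), hence extends from $\mu\colon E(K)/pE(K)\to K$ to $\mu\colon E(K_t)/pE(K_t)\to K_t$ while remaining a homomorphism annihilating $p$-multiples. Thus
\[
\mu(P) \;=\; \mu(pR) + \mu(S) \;=\; \mu(S), \qquad \nu(P) \;=\; \mu(S)\,\lambda,
\]
so the proposition reduces to estimating $\ord_t(\nu(S))$ in terms of $\iota$.

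This is precisely the content of the calculation in Proposition~\ref{prop:bound1} and its refinement in Remark~\ref{rem:bounds}: applying \eqref{eq:Broumas} to $S$ (whose coordinates $x(S), y(S)$ have poles of orders $2\iota$ and $3\iota$ at $t$) and tracking valuations term by term yields
\[
\ord_t(\nu(S)) \;\ge\; \min\bigl(\, p(\iota-1) - \ord_t(D),\; \iota - 1 + \ord_t(A)\,\bigr),
\]
and nothing in the argument requires $S$ to be global. It remains to specialize $\ord_t(D)$ and $\ord_t(A)$ on $\Ig_1(N)$. Using \eqref{eq:divs1} together with $A = \alpha^{p-1}$ and the calculation of $D_0$ from the divisor of $\lambda$, one finds $\ord_t(A) = \ord_t(D) = 0$ at ordinary or cuspidal $t$, producing the bound $\iota - 1$; at a supersingular $t$ one has $\ord_t(A) = p - 1$ and $\ord_t(D) = (p-1)(p-2)$, so the minimum becomes $p(\iota - 1) - (p-1)(p-2)$ for $\iota < p$ and $\iota + p - 2$ for $\iota \ge p$, as claimed.

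\emph{Main obstacle.} The crux is the identity $\mu(P) = \mu(S)$ for the purely local section $R$: one must verify that Broumas's formula and the vanishing $\mu(pR) = 0$ are robust under passage from global to local sections. Once this is granted, the remaining order estimate is a careful but mechanical valuation computation, and the two supersingular sub-cases simply reflect the crossover between the $p$-th power term $\tau^p$ and the linear term $A\tau$ of $\wp_A(\tau)$, forced by the simple zero of $\alpha$ at supersingular points.
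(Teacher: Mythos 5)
Your proof is correct in substance, and for part (1) and the reduction step of part (2) (passing to $S=P-pR$ and using that $\mu$ kills $pE(K_t)$) it coincides with the paper's argument. Where you genuinely diverge is in the valuation estimate itself. The paper does \emph{not} reuse the Section~2 computation: it switches to the Igusa-specific form \eqref{eq:Broumas2} of Broumas's formula, chooses a point $Q$ on $E^{(p)}$ with $VQ=S$, and at a supersingular point analyzes the Verschiebung in the formal group, $V(z)=Az+uz^p+\cdots$, to show that $Q$ lives over a wildly ramified extension with ramification break $p-\iota$ when $\iota<p$ (and over $\OO_t$ itself, meeting $O$ to order $\iota-(p-1)$, when $\iota\ge p$); the bounds then come from the resulting order of vanishing of $dx'(Q)/2y'(Q)$. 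You instead apply the direct Weierstrass-coordinate formula \eqref{eq:Broumas} to the local section $S$ and track valuations term by term, specializing $\ord_t(A)=p-1$ and $\ord_t(D)=(p-1)(p-2)$ at supersingular points; I checked that this reproduces exactly the three displayed bounds, and your observation that the case split at $\iota=p$ is the crossover between the $z^p$ and $Az$ terms of $\wp_A$ is the correct shadow, in your picture, of the paper's ramified-versus-unramified dichotomy for $Q$. Two caveats. First, you lean on Remark~\ref{rem:bounds}, which the paper states only as something ``the careful reader will note'' and does not prove in the supersingular case; to be self-contained you should actually carry out that term-by-term estimate (it is routine, but it is the mathematical content of the case $t$ supersingular). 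Second, you correctly flag the crux — that Broumas's formula defines a homomorphism on $E(K_t)$ annihilating $pE(K_t)$ — and the paper resolves it exactly as you hope, by the remark following \eqref{eq:Broumas2}. The trade-off: your route is more elementary and uniform with Section~2; the paper's route yields extra structural information (the conductor/ramification of $K_t(Q)/K_t$, matching \cite[Thm.~5.5]{Ulmer91}) that a pure valuation count does not see.
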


\begin{proof}
  (1) Because $\EE$ and $\EE'$ have everywhere semistable reduction,
  the connected components of the identity
  $G=\left(\pi^{-1}(t)\right)^0$ and are either elliptic curves or
  multiplicative groups, and multiplication map $p:G\to G$ is
  surjective.  Moreover, since $\Ig_1(N)\to X_1(N)$ is completely
  split over the cusps, the component groups of $\EE$ have order
  dividing $N$ and thus of order prime to $p$.  Therefore, $p$ is also
  surjective on the component groups, and using the snake lemma, on
  all of $\EE$.  Suppose that $w\in \EE$ satisfies $pw=z$.  Since
  $\pi$ is smooth, by Hensel's lemma $w$ lifts to a point $R$ with
  values in $\OO_{t}$, and we have $pR(t)=pw=z=P(t)$, as desired. 

  (2) With $P$ and $R$ as in the statement, let $R_0=P-pR$, a local
  section of $\pi$ which specializes to the identity at $t$.  Since
  $\nu$ kills $pE(K_t)$, we have $\nu(P)=\nu(R_0)$.  Moreover,
  \[\iota:=(pR,P)_t=(O,R_0)_t,\] %
  so we are reduced to showing that if $R_0$ is a local section of
  $\pi$ which specializes to the identity and if $\iota=(O,R_0)_t$,
  then $\nu(R_0)$ is bounded below by the quantity displayed in the
  statement of part (2).

  If $t$ is not a supersingular point, then $V$ is \'etale over $t$
  and we may choose $Q_0\in E^{(p)}(\OO_t)$ with $VQ_0=R_0$ and with
  $Q_0$ specializing to the identity at $t$.  With this choice, $Q_0$
  must meet the identity section to order $\iota$, i.e., we have
  $\ord_t(x'(Q_0))=-2\iota$ and $\ord_t(y'(Q_0))=-3\iota$.  We find
  that
\[\ord_t(dx'(Q)/2y'(Q))\ge \iota-1,\]
and formulas \eqref{eq:Broumas2}, \eqref{eq:divs1}, and
\eqref{eq:divs2} then imply that
\[\ord_t(\nu(P))\ge \iota-1.\]

Now assume that $t$ is supersingular.  Then the points $Q_0$ with
$VQ_0=R_0$ are defined over an extension $\OO'$ of $\OO_t$ which may
be ramified, and in order to compute the different, we consider
$V:E^{(p)}\to E$ in the formal groups of $E$ and $E^{(p)}$.  It takes
the shape
\[ V(z)=Az+uz^p+\cdots\] %
where $u$ is a unit at $t$.  Since $R_0$ meets the zero section to
order $\iota$, the choice of $Q_0$ meeting the zero section
corresponds to a solution of
\[v\pi_t^\iota=Az+uz^p+\cdots,\] %
where $v$ is a unit.

Since $A$ vanishes to order $p-1$, a consideration of slopes shows
that if $\iota$ is $<p$, $\OO'$ is (wildly) ramified over $\OO$ with
ramification break $m=p-\iota$.  We find that $dx'(Q_0)/2y'(Q_0)$
vanishes in $\Omega^1_{\OO'/k}$ to order $(p-1)(p-\iota+1)$, and
$Q_0$ meets the zero section to order $\iota$.  Using
\eqref{eq:Broumas2}, \eqref{eq:divs1}, and \eqref{eq:divs2}, we find
that
\begin{align*}
  \ord_t(\nu(P))&\ge (2p-2)+(\iota-1)-(p-1)(p-\iota+1)\\
  &=p(\iota-1)-(p-1)(p-2).
\end{align*}

If $\iota\ge p$, then the choice of $Q_0$ meeting the zero section is
defined over $\OO_t$ and meets the zero section to order
$\iota-(p-1)$, and using \eqref{eq:Broumas2}, \eqref{eq:divs1}, and
\eqref{eq:divs2}, we find that
\[\ord_t(\nu(P))\ge \iota+p-2.\]

This completes the proof of the proposition.
\end{proof}

As a reality check, note that when $P(t)$ is torsion and $R$ is chosen
to be a torsion multisection, then the bounds in
Proposition~\ref{prop:local-points-and-bounds} agree with
those in Remark~\ref{rem:bounds}.  There is another reality
check coming from \cite[Thm.~5.5]{Ulmer91}, where $\mu(P)$ is related
to the ramification over $t$ of the extension $K_t(Q_0)/K_t$, in
agreement with our conductor calculation above.

Part (1) of the proposition shows that the $\sup$ in the following
definition is finite, so $I(P,t)$ is a well-defined integer.

\begin{defn}\label{def:I}
  For a point of infinite order $P\in E(K)\setminus pE(K)$ and a
  closed point $t\in\CC$, let
  \[I(P,t)=\sup_{R}\left((pR,P)_t\right),\]
  where $R$ runs through sections of $\pi$ over $\spec\OO_t$ with
  $pR(t)=P(t)$.
\end{defn}

When $P(t)$ is $n$ torsion, then $I(P,t)\ge(P,pR)_t=(nP,O)_t$ where
$pR$ is the $n$-torsion multisection through $P(t)$.  We do not know
of an example where the inequality is strict.

By definition, $I(P,t)\ge1$, and $I(P,t)\ge2$ if and only if $Q$ is
tangent over $t$ to a local point divisible by $p$.  These are the
``more general tangencies'' in the title of this section.  

The same argument establishing Corollary~\ref{cor:bound1} yields:

\begin{cor}
  Let
  \[T=\left\{t\in\CC\left| I(P,t)>1\right.\right\},\]
    \[T_s=\left\{t\in T\left| \text{$t$ is supersingular}\right.\right\}\]
    and $T_o=T\setminus T_s$.  Then
    \begin{align*}
      |T|\le |T_o|+p|T_s|&\le p(2g-2-d)+(p-1)\delta,\\
      &=p(p-1)\sigma-\delta,
    \end{align*}
    where $g$ is the genus of $\CC$, $d=\deg(\omega)$, $\delta$ is the
    number of points 
    where $E$ has bad reduction, and $\sigma$ is the number of points
    where $E$ has supersingular reduction.
\end{cor}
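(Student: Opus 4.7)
The plan is to imitate the proof of Corollary~\ref{cor:bound1} verbatim, replacing the local bound coming from Proposition~\ref{prop:bound1} by the stronger local bound from Proposition~\ref{prop:local-points-and-bounds}. The strategy has three movements: identify the global line bundle $\LL$ carrying $\nu(P)$, compute $\deg(\LL)$ in the Igusa setting, and then convert the pointwise lower bounds on $\ord_t(\nu(P))$ at tangency points into a divisorial inequality on $\CC$.

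First I would show that $\nu(P)$ extends to a nonzero global section of
\[\LL:=\Omega^1_\CC\tensor\omega^{p-2}(D),\qquad D:=(p-1)(p-2)(ss),\]
which is the specialization of the divisor $D=(p-1)D_0+pD'$ of Proposition~\ref{prop:bound1} to the Igusa curve: by \eqref{eq:divs1} one has $\dvsr_0(\lambda)=(p-2)(ss)$, and on $\Ig_1(N)$ the component groups have order dividing $N$ and hence prime to $p$, so $D'=0$. Nonvanishing of $\nu(P)$ is guaranteed since $P\notin pE(K)$ and $\mu$ is injective modulo $pE(K)$. Next I would compute
\[\deg(\LL)=(2g-2)+(p-2)d+(p-1)(p-2)\sigma=p(2g-2-d)+(p-1)\delta,\]
and then substitute the Igusa identities $d=\sigma$ and $2g-2=p\sigma-\delta$ to rewrite this as $p(p-1)\sigma-\delta$. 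This verifies the equality of the two expressions for the upper bound.

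Third, I would translate the bounds of Proposition~\ref{prop:local-points-and-bounds}(2), which are stated as $\ord_t$ of $\nu(P)$ viewed as a rational section of $\Omega^1_\CC\tensor\omega^{p-2}$, into $\ord_t$ as a section of $\LL$ by adding $\ord_t(D)$. At a non-supersingular $t\in T_o$, one has $\ord_t(D)=0$ and $\iota:=I(P,t)\ge 2$, yielding $\ord_t(\nu(P))_\LL\ge\iota-1\ge 1$. At a supersingular $t\in T_s$ with $2\le\iota<p$, adding $(p-1)(p-2)=\ord_t(D)$ to the bound $p(\iota-1)-(p-1)(p-2)$ yields $\ord_t(\nu(P))_\LL\ge p(\iota-1)\ge p$; and for $\iota\ge p$ the second case of the proposition gives an even larger value. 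Hence
\[|T_o|+p|T_s|\;\le\;\sum_{t\in\CC}\ord_t(\nu(P))_\LL\;\le\;\deg(\LL),\]
which is exactly the desired inequality, while $|T|\le|T_o|+p|T_s|$ is immediate.

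The main obstacle, such as it is, is just the bookkeeping for the twist: one must be careful that the local bounds in Proposition~\ref{prop:local-points-and-bounds} are stated for the untwisted bundle and exactly cancel the pole contribution at supersingular points to leave a clean vanishing of order $p$. Everything else is a routine degree computation and an application of the fact that a nonzero section of a line bundle has total order of vanishing equal to the degree of that bundle.
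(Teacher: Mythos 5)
Your proposal is correct and is essentially the paper's own argument: the paper proves this corollary by stating that ``the same argument establishing Corollary~\ref{cor:bound1} yields'' the result, i.e., one views $\nu(P)$ as a global section of $\Omega^1_\CC\tensor\omega^{p-2}(D)$ with $D=(p-1)(p-2)(ss)$ and replaces the local input from Proposition~\ref{prop:bound1} by the bounds of Proposition~\ref{prop:local-points-and-bounds}. Your degree computation and the cancellation of the $(p-1)(p-2)$ twist at supersingular points are exactly the intended bookkeeping.
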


\section{Complex Betti leaves and the Manin map}
In this section, we work over the complex numbers.  It was already
noted in \cite[Thm.~2.8]{CorvajaDemeioMasserZannier22} that the Manin
map can be used to bound tangencies with the Betti foliation, although
the connection there is rather loose and does not directly give
finiteness.  Here, we define a new class of tangencies via
``complex Betti leaves,'' interpret them in terms of the Manin map,
and deduce a strong upper bound.

In order to compute examples, we need an explicit algebraic version of
the Manin map, and we note, as did the authors of
\cite{CorvajaDemeioMasserZannier22}, that published versions
of this map are incorrect, so we give a detailed correction.

Throughout this section, $\pi:\EE\to\CC$ is an elliptic
fibration with zero section $O$ and another section $P$ of infinite
order.  Except in Remark~\ref{rem:isotrivial}, we assume that $\EE$ is
non-isotrivial. 

\subsection{Complex Betti leaves}\label{ss:unif}
Our approach is closely related to that in \cite{UlmerUrzua21}, so we
use the notation of that paper freely here.  In particular, let
$\CC^0$ be the open over which $\pi$ is smooth, choose a base point
$b\in\CC^0$ and a basis of $H_1(\pi^{-1}(b),\Z)$,  let
$\widetilde{\CC^0}$ be the universal cover of $\CC^0$, and let
$\Gamma=\pi_1(\CC^0,b)$.  Then we have a
monodromy representation $\rho:\Gamma\to\SL_2(\Z)$ with coordinates
\[\rho(\gamma)=\pmat{a_\gamma&b_\gamma\\c_\gamma&d_\gamma},\]
a period map $\tau:\widetilde{\CC^0}\to\HH$ to the upper half-plane
$\HH$, a cocycle
\[f_\gamma(\tilde t)=\left(c_\gamma\tau(\tilde t)+d_\gamma\right)^{-1},\]
and a diagram 
\begin{equation}\label{eq:unif}
\xymatrix{\left(\widetilde{\CC^0}\times\C\right)/\Z^2\ar[r]\ar[d]
  &\left(\widetilde{\CC^0}\times\C\right)/(\Gamma\ltimes\Z^2)
         \ar^{\quad\qquad\sim}[r]\ar[d]
    &\EE^0\ar[d]\ar@{^{(}->}[r]&\EE\ar[d]^{\pi}\\
    \widetilde{\CC^0}\ar[r]&\widetilde{\CC^0}/\Gamma\ar^{\sim}[r]&
    \CC^0\ar@{^{(}->}[r]&\CC,}  
\end{equation}
of complex manifolds where $\Gamma\ltimes\Z^2$ acts on
$\widetilde{\CC^0}\times\C$ by
\[\left(\gamma,m,n\right)(\tilde t,w)
  =\left(\gamma\tilde t,f_{\gamma}(\tilde t)
    (w+m\tau(\tilde t)+n)\right).\]

Let $\GG$ be the direct image on $\CC$ of the $\Z^2$ local system on
$\CC^0$ associated to $\rho$, and let $\omega^{-1}$ be the inverse of
the line bundle $O^*(\Omega^1_{\EE/\CC})$.  This is an extension to
$\CC$ of the line bundle on $\CC^0$ associated to the cocycle
$f_\gamma$.  Let $\EE^{id}$ be the sheaf of sections of $\pi$ which
land in the identity component of each fiber.  Then we have an exact
sequence
\begin{equation}\label{eq:id-comp}
0\to\GG\to\omega^{-1}\to\EE^{id}\to0,  
\end{equation}
which is a sheaf-theoretic manifestation of the uniformization
\eqref{eq:unif} extended over all of $\CC$.

In \cite{UlmerUrzua21}, we defined Betti leaves by considering, for
fixed $(r,s)\in\R^2$, the image in $\EE^0$ of the set
\[\left\{(\tilde t,r\tau(\tilde t)+s)\left|\ \tilde
      t\in\widetilde{\CC^0}\right.\right\}
\subset \widetilde{\CC^0}\times\C,\]
This is a (typically non-closed) submanifold of $\EE^0$ which depends
only on the class of $(r,s)$ in $(\R/\Z)^2/\Gamma$, and these
submanifolds give a foliation of $\EE^0$ by (complex) curves.  We
refer to \cite[\S3]{UlmerUrzua21} for more details.

In this paper, we define \emph{complex Betti leaves} associated to
pairs $(a,b)\in\C^2$ where the ``leaf'' associated to $(a,b)$ is
the image of the set 
\[\left\{(\tilde t,a\tau(\tilde t)+b)\left|\ \tilde
      t\in\widetilde{\CC^0}\right.\right\} \subset
  \widetilde{\CC^0}\times\C,\]
in $\EE^0$.  This image is a (typically non-closed) submanifold of
$\EE^0$ which we denote by $\LL_{a,b}$ and which depends only on the
class of $(a,b)$ in $(\C/\Z)^2/\Gamma$.

The term leaf is somewhat abusive since these submanifolds do not form
a foliation of $\EE^0$: whereas a Betti leaf is determined by any one
of its points, over the open set where $\tau$ is an immersion there is
a complex Betti leaf passing through a given point and with a given
derivative.  (So lifts of complex Betti leaves foliate an open subset
of the projectivized tangent bundle.)  In particular, given a section
$P$ of $\pi$ and a point $t\in\CC^0$ where $\tau'(t)\neq0$, we can
find a complex Betti leaf which meets $P$ to order at least 2.  The
``unlikely intersections'' we will study occur when this intersection
multiplicity is at least 3.

Let $t\in\CC\setminus\CC^0$ be a point of bad reduction. We say that a
complex Betti leaf is \emph{invariant} at $t$ if it is fixed by the
local monodromy, and \emph{vanishing} if not.
Arguing as in \cite[\S3.4]{UlmerUrzua21} yields the following
information on the local behavior of complex Betti leaves.

\begin{lemma}\label{lemma:inv-leaves}\mbox{}
  \begin{enumerate}
  \item A complex Betti leaf $\LL_{a,b}$ extends to a section of $\pi$
    in a neighborhood of $t$ if and only if it is an invariant leaf.
    In this case, it meets the closed fiber over $t$ in a point at which
    $\pi$ is smooth.
  \item If $\EE$ has reduction of type $I_n$ at $t$
    \textup{(}$n>0$\textup{)}, then the invariant leaves $\LL_{a,b}$
    are those where $a\in\frac1n\Z$, and every smooth point of the
    fiber $\pi^{-1}(t)$ is the limit of an invariant leaf.
    \item If $\EE$ has additive reduction at $t$, then there are
        only finitely many invariant leaves at $t$, and their images
        in the special fiber are exactly the torsion points of the
        fiber.
     \item A section is a complex Betti leaf if and only if it is
       torsion. 
  \end{enumerate}
\end{lemma}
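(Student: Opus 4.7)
The plan is to handle each part by direct analysis of the uniformization diagram \eqref{eq:unif}. By construction, the complex Betti leaf $\LL_{a,b}$ is the image in $\EE^0$ of the graph of the function $\tilde t \mapsto a\tau(\tilde t) + b$ on $\widetilde{\CC^0}$. Substituting into the explicit cocycle
\[(\gamma, m, n)(\tilde t, w) = (\gamma \tilde t, f_\gamma(\tilde t)(w + m\tau(\tilde t) + n))\]
and using the fractional-linear formula for $\tau(\gamma \tilde t)$, one checks that $\gamma$ preserves this graph modulo the $\Z^2$-action if and only if there exist integers $m, n$ with $a_\gamma a + c_\gamma b = a + m$ and $b_\gamma a + d_\gamma b = b + n$; equivalently, the class of $(a, b)$ in $(\C/\Z)^2$ is fixed by the action of $\rho(\gamma)$ (via its transpose). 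All four parts of the lemma will be formal consequences of this criterion.

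For part (1), invariance under a generator $\gamma_t$ of the local monodromy is precisely the condition for $\tilde t \mapsto a\tau(\tilde t) + b$ to descend to a single-valued holomorphic map from a punctured neighborhood of $t$ into $\EE^0$. Such a map extends holomorphically across $t$ by the Néron property of the minimal elliptic surface, and the extension lands in the smooth locus of $\pi^{-1}(t)$ because every section of the generic fiber $E/K_t$ specializes to a simple component. Part (2) is then obtained by plugging the standard form $\rho(\gamma_t) = \psmat{1 & n \\ 0 & 1}$ into the invariance condition, which reduces to $na \in \Z$ with $b$ free; there are $n$ residues of $a$ modulo $\Z$, corresponding to the $n$ smooth $\mathbb{G}_m$-components of the $I_n$ fiber, and as $b$ ranges through $\C$ the limit of $\LL_{a,b}$ sweeps out the corresponding component in its entirety.

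For part (3), at every additive fiber the local monodromy is either finite of order $2, 3, 4$ or $6$ (types $\mathrm{II}, \mathrm{III}, \mathrm{IV}$ and their starred analogues) or $-I$ times a unipotent (type $\mathrm{I}_n^{*}$); in all these cases $1$ is not an eigenvalue of $\rho(\gamma_t)$ on $\R^2$, so $I - \rho(\gamma_t)$ is invertible over $\Q$ and the fixed subgroup of $(\R/\Z)^2$ is finite of order $|\det(I - \rho(\gamma_t))|$. Comparing this determinant with the Kodaira table of component groups ($1, 2, 3, 4, 3, 2, 1$ for types $\mathrm{II}, \mathrm{III}, \mathrm{IV}, \mathrm{I}_n^{*}, \mathrm{IV}^*, \mathrm{III}^*, \mathrm{II}^*$) yields the claimed bijection. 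Since the identity component of the additive Néron fiber is the torsion-free group $\mathbb{G}_a$, the torsion of the smooth part of $\pi^{-1}(t)$ equals the component group, which matches the count of invariant leaves.

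Finally, for part (4), a section $P$ gives a holomorphic lift $w_P \colon \widetilde{\CC^0} \to \C$, and the equality $P = \LL_{a,b}$ is equivalent to $w_P = a\tau + b$ for constants $a, b$. By the invariance criterion this forces $(a, b) \bmod \Z^2$ to be simultaneously fixed by all of $\rho(\Gamma)$. Since $\EE$ is assumed non-isotrivial, Deligne's theorem of the fixed part applied to the variation of Hodge structure attached to $\EE \to \CC$ implies that $\rho(\Gamma)$ has no nonzero invariant vector in $\R^2$; hence some $\gamma$ has $I - \rho(\gamma)$ invertible over $\Q$, and the relation $(I - \rho(\gamma))(a,b) \in \Z^2$ forces $(a, b) \in \Q^2$, so $P$ is torsion. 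The converse is immediate: an $N$-torsion section is represented by a lift of the form $w_P = (k/N)\tau + \ell/N$ with $k, \ell \in \Z$, hence is a complex Betti leaf. The principal technical obstacle is the Kodaira-by-Kodaira matching in part (3) of $|\det(I - \rho(\gamma_t))|$ with the component group order, and (relatedly) the careful verification that the image in the special fiber lies exactly at the torsion locus.
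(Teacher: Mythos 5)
Your proof is correct and follows essentially the same route as the paper, which simply defers to the local monodromy analysis of \cite[\S3.4]{UlmerUrzua21}: you compute the fixed points of the (transposed) monodromy action on the parameters $(a,b)$, now in $(\C/\Z)^2$ rather than $(\R/\Z)^2$, and your determinant count $|\det(I-\rho(\gamma_t))|$ matching the Kodaira component-group orders, together with the Kolchin/fixed-part argument in part (4), is exactly the intended argument. The only point stated more loosely than it should be is the extension of an invariant leaf across the puncture in part (1), which is not automatic for a holomorphic map of a punctured disk into a projective surface and should instead be checked from the explicit local models (Tate parameterization for $I_n$, finite base change for additive fibers), as in the cited reference.
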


(In the additive case, the order of the torsion points in the special
fiber is the same as the order of the component group, namely 1, 2, 3,
or 4.)

\begin{rem}\label{rem:isotrivial}
  If $\EE\to\CC$ is isotrivial, then the period function $\tau$ is
  constant and the complex Betti leaves are just the usual Betti
  leaves.  They are all closed, and the form $\tilde\eta$ used in
  \cite{UlmerUrzua21} to detect tangencies is just $dw$.  In other
  words, in the isotrivial case, tangencies are detected by zeroes of
  $P\mapsto P^*(dx/2y)$, a degenerate form of Manin's map.
\end{rem}

\subsection{Intersection numbers}
We define a local intersection index $I(P,t)$ as the maximum order of
contact at $P(t)$ between $P$ and the complex Betti leaves through
$P(t)$:
\[I(P,t):=\max_{a,b}\left(P.\LL_{a,b}\right)_t.\]
(This is finite because of the last item in
Lemma~\ref{lemma:inv-leaves}.)   Over an non-empty open subset of
$\CC^0$ (made more precise below), we have $I(P,t)=2$, so the
exceptional set of tangencies is the set of $t$ where $I(P,t)\ge3$.

We now describe the index $I(P,t)$ in terms of the uniformization of
the last section.  For a holomorphic function $h(z)$ defined on a disk
around $z=0$, let $\mu(h)$ be the order to which $h$ takes its value
at 0:
\[\mu(h)=\ord_{z=0}\left(h-h(0)\right).\]
Clearly $I(nP,t)=I(P,t)$, so we may replace $P$ with a multiple and
assume that it passes through the identity component of $\EE$ in each
fiber, i.e., that $P$ is a section of $\EE^{id}$.  We may then lift
$P$ (in the exact sequence \eqref{eq:id-comp}) to a section of
$\omega^{-1}$ near $t$ of the form $h(z)w_{-1}$ where $w_{-1}$ is a
generating section and $h$ is holomorphic.  If $E$ has good reduction
at $t$,
\[I(P,t)=\max_{a,b\in\C}\ord_{z=0}\left(h-a\tau-b\right).\]
There are choices of $(a,b)$ which makes the $\ord$ at least 1 (for
example, $a=0$ and $b=h(0)$), and in the typical situation where
$\tau'(0)\neq0$, there is a unique choice of $(a,b)$ which makes the
$\ord$ at least 2.  In general, we have
\[I(P,t)=
  \begin{cases}
    \mu(h)&\text{if $\mu(h)\neq\mu(\tau)$}\\
    \ge\mu(h)+1&\text{if $\mu(h)=\mu(\tau)$}
  \end{cases}\]
Note that in this case $I(P,t)\ge1$  and $I(P,t)\ge2$ if $\tau'(t)\neq0$.

If $E$ has multiplicative reduction at $t$, then the invariant leaves
are parameterized by $b\in\C$, there is a unique leaf meeting $P$ (the
one corresponding to $b=h(0)$), and we have
\[I(P,t)=\ord_{z=0}\left(h-h(0)\right)=\mu(h)\ge1.\]

If $E$ has additive reduction at $t$, the only invariant leaf is the
zero section, and we have
\[I(P,t)=\ord_{z=0}\left(h(z)\right)\ge0.\]

\subsection{The Manin map}
Let $K=\C(\CC)$ and let $E/K$ be the generic fiber of $\pi:\EE\to\CC$.
Then the set of sections of $\pi$ can be identified with the
Mordell-Weil group $E(K)$.  In \cite{Manin63b}, Manin defines a
homomorphism
\[M:E(K)\to K\]
which depends on the choice of an invariant differential on $E$ and on
a ``Picard-Fuchs operator'' $L$, i.e., a second-order differential
operator which kills the periods of the fibers of $\pi$.  (We will
make this more explicit in one example below.)  In terms of the
uniformization of Section~\ref{ss:unif} and a Weierstrass equation
$y^2=f(x)$, we have
\begin{equation}\label{eq:M}
M(P)=\frac{d^2}{d\tau^2}\int_O^P\frac{dx}y  
\end{equation}
where the integral is a function on $\widetilde{\CC^0}$ given by
integrating from $O$ to $P(t)$ along a path in the fiber
$\pi^{-1}(t)$. The values of the integral are only well defined up to
periods, and these are killed by the second-order derivative, so
$M(P)$ defines a meromorphic function on $\CC$.  As explained in
\cite{Manin98}, one may make this independent of choices by dividing
by the symbol of $d^2/d\tau^2$ (i.e., multiplying by the
quadratic differential $(d\tau)^{\tensor2}$) and dividing by the
section $dx/y$ of $\omega$.  This yields a map
\begin{equation}\label{eq:MM}
P\mapsto\MM(P):=\left(\frac{d^2}{d\tau^2}\int_O^P\frac{dx}y\right)
  \frac{(d\tau)^2}{dx/y}  
\end{equation}
from $E(K)$ to
meromorphic sections of the line bundle
\[\left(\Omega^1_\CC\right)^{\tensor2}\tensor\omega^{-1}.\]
We will quantify the possible poles of $\MM(P)$ in
Proposition~\ref{prop:bounds} below. 

An alternative point of view is afforded by the exact
sequence \eqref{eq:id-comp}.  If $P$ lies in $\EE^{id}$, computing the
integral $\int_O^Pdx/y$ amounts to lifting $P$ to a section of
$\omega^{-1}$.  A simple calculation shows that taking the second
derivative with respect to $\tau$ yields a (rational) section of
$\omega^3$, and the Kodaira-Spencer map
$\omega\to\Omega^1_\CC\tensor\omega^{-1}$ then gives a section of
$\left(\Omega^1_\CC\right)^{\tensor2}\tensor\omega^{-1}$.

Define $J(P,t)$ as the order of $\MM(P)$:
\[J(P,t):=\ord_t\MM(P),\]
where the order is computed as a section of
$\left(\Omega^1_\CC\right)^{\tensor2}\tensor\omega^{-1}$. 

Define a set $S$ of points of $\CC$ by
\begin{multline*}\label{eq:1}
S=\left\{t\in\CC\mid\text{$\EE$ has bad reduction at $t$}\right\}
\cup \left\{t\in\CC\mid\text{$j(t)\neq0,1728$ and $\ord_t(dj)>0$}\right\}\\  
\cup\left\{t\in\CC\mid\text{$j(t)=0$ and $\ord_t(dj)>2$}\right\}
\cup \left\{t\in\CC\mid\text{$j(t)=1728$ and $\ord_t(dj)>1$}\right\}.
\end{multline*}

The points in $S$ are where either $\EE$ has bad reduction or a
branch of the period $\tau$ is not a uniformizer:

\begin{lemma}
  If $t\in\CC\setminus S$ and $\tilde t\in\CC^0$ lies over $t$, then
  $\tau'(\tilde t)\neq0$.
\end{lemma}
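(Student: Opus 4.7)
The plan is to reduce the statement to a computation of orders of vanishing via the classical modular $j$-function. Since $t \notin S$ in particular means $\EE$ has good reduction at $t$, the $j$-invariant $j$ is holomorphic at $t$, and for $\tilde t$ lying over $t$ the period $\tau$ is a holomorphic function near $\tilde t$ taking values in $\HH$. The key relation is that $j$ on $\CC^0$ is the pullback of the classical $j$-function $J:\HH\to\C$ under the period map; that is, locally near $\tilde t$ we have $j = J \circ \tau$, whence as differentials on $\CC$ (using a local coordinate $u$ at $t$),
\[dj = J'(\tau)\,d\tau.\]

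Next I would use the well-known ramification behavior of $J:\HH\to\C$. The function $J$ is unramified away from the elliptic points, has ramification index $2$ at the preimages of $j=1728$, and ramification index $3$ at the preimages of $j=0$. Equivalently, $J'$ has a zero of order
\[m=\begin{cases}0&\text{if }j(\tau(\tilde t))\neq 0,1728,\\ 1&\text{if }j(\tau(\tilde t))=1728,\\ 2&\text{if }j(\tau(\tilde t))=0,\end{cases}\]
at $\tau(\tilde t)$. Let $r=\ord_{u=0}(\tau(u)-\tau(\tilde t))\ge 1$; then $\tau'(\tilde t)\neq 0$ is equivalent to $r=1$. Substituting into $dj=J'(\tau)\,d\tau$, the chain rule gives
\[\ord_t(dj) = mr + (r-1) = r(m+1)-1.\]

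Finally I would match this with the defining conditions for $S$. Since $t\notin S$:
\begin{itemize}
\item If $j(t)\neq 0,1728$, then $\ord_t(dj)=0$, so $m=0$ forces $r=1$.
\item If $j(t)=1728$, then $\ord_t(dj)\le 1$, so $m=1$ gives $2r-1\le 1$, i.e.\ $r=1$.
\item If $j(t)=0$, then $\ord_t(dj)\le 2$, so $m=2$ gives $3r-1\le 2$, i.e.\ $r=1$.
\end{itemize}
In every case $r=1$, which is exactly $\tau'(\tilde t)\neq 0$.

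The main point to get right is the identification of the ramification orders of $J$ at the two elliptic points and the verification that, locally near $\tilde t$ and $t$, one really does have $j=J\circ\tau$ as holomorphic functions (which needs good reduction, hence the inclusion of the bad reduction locus in $S$). Everything else is bookkeeping with the chain rule, and the three thresholds $0,1,2$ appearing in the definition of $S$ are manifestly tailored so that $r=1$ in each case.
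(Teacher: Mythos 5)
Your proof is correct and follows the same route as the paper's: the paper simply cites the well-known ramification of $j:\HH\to\C$ (unramified away from the elliptic points, of order $3$ over $j=0$ and order $2$ over $j=1728$), and your chain-rule computation $\ord_t(dj)=r(m+1)-1$ together with the three case checks is exactly the bookkeeping that argument leaves implicit.
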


\begin{proof}
This follows from the well-known fact that $j$, considered as a map
from the upper half plane to $\C$, is unramified away from $j=0$ or
$1728$, and it is ramified of order 3 over $j=0$ and of order 2 over
$j=1728$.   
\end{proof}

The following is the key connection between intersection numbers and the
Manin map.

\begin{prop}\label{prop:bounds}
  If $t\not\in S$,
  \[J(P,t)=I(P,t)-2\ge0.\]
  If $t\in S$, 
  \[J(P,t)\ge-1.\]
\end{prop}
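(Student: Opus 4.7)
The plan is to express $\MM(P)$ in local coordinates as a second-order differential operator applied to a local lift of $P$, and then exploit the fact that this operator annihilates the pair $(1,\tau)$, so that its order measures the order of approximation $h \approx a\tau + b$, which is precisely the complex Betti intersection number $I(P,t)$. Concretely, fix a uniformizer $z$ of $\CC$ at $t$, and, after replacing $P$ by a suitable multiple (which affects neither $I$ nor $J$ since $\MM$ is a homomorphism), assume $P$ takes values in $\EE^{id}$. The exact sequence \eqref{eq:id-comp} then yields a holomorphic local lift of $P$ to $\omega^{-1}$, represented in a local trivialization by a function $h(z)$ with $h = \int_O^P dx/y$ along a path in the relevant fiber. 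Using $d/d\tau = \tau'(z)^{-1}\,d/dz$ together with \eqref{eq:MM}, a short calculation gives
\[\MM(P) = L(h)\cdot\frac{(dz)^2}{dx/y}, \qquad L := \frac{d^2}{dz^2} - \frac{\tau''(z)}{\tau'(z)}\,\frac{d}{dz},\]
up to a unit at $t$; note that $L$ annihilates constants and $\tau$.

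For $t\notin S$, the lemma above gives $\tau'(t)\neq 0$, so $\eta := \tau''/\tau'$ is holomorphic, and at a point of good reduction $(dz)^2/(dx/y)$ is a local generator of $(\Omega^1_\CC)^{\tensor 2}\tensor\omega^{-1}$, hence $J(P,t) = \ord_{z=0} L(h)$. The inequality $\ord L(h)\ge I(P,t) - 2$ is immediate: if $h - a\tau - b = O(z^N)$ then $L(h) = L(h - a\tau - b) = O(z^{N-2})$. For the reverse inequality, choose $a = h'(0)/\tau'(0)$ and $b = h(0) - a\tau(0)$, so $g := h-a\tau-b$ satisfies $g(0)=g'(0)=0$. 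Rewriting $g'' = \eta g' + L(g)$ and differentiating, an induction on Taylor coefficients shows that $L(g) = O(z^{N-2})$ forces $g^{(k)}(0)=0$ for all $k<N$, so $g = O(z^N)$. Thus $J(P,t) = I(P,t) - 2$; since $\tau'(t)\neq 0$ forces $I(P,t)\ge 2$, this gives $J(P,t) \ge 0$.

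For $t\in S$, I must show $\MM(P)$ has at most a simple pole, which I handle in three sub-cases. If $t\in\CC^0$ with $\tau'$ vanishing to order $m\ge 1$, then $\eta = m/z + O(1)$ has a simple pole while $h$ is holomorphic, so $\ord L(h)\ge -1$. At a point of $I_n$ reduction, a local branch of $\tau$ equals $(n/2\pi i)\log z$ plus a holomorphic term, giving again $\eta = -1/z + O(1)$; taking $dx/y$ to be the N\'eron differential of a minimal Weierstrass model preserves $(dz)^2/(dx/y)$ as a local generator of the target bundle, so the same estimate holds. In the additive case, pull back along a cover $\pi:\tilde\CC\to\CC$ over which $\EE$ acquires semistable reduction, use functoriality of the Manin map to relate $\pi^*\MM(P)$ to $\MM(\pi^*P)$, and apply the $I_n$ bound on $\tilde\CC$; the descent to $\CC$ uses the ramification and bundle-twist data $(e,f)$ tabulated in the final remark of Section~\ref{s:p-descent}. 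The main obstacle is the ODE step in the $t\notin S$ case, which genuinely identifies the analytic intersection number with an algebro-geometric order of vanishing; the $S$-analysis is more routine, though the additive-reduction sub-case demands careful tracking of how $(\Omega^1_\CC)^{\tensor 2}\tensor\omega^{-1}$ behaves under $\pi$.
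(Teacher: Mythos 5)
Your treatment of $t\notin S$ and of the first two sub-cases of $t\in S$ is essentially the paper's argument: you write $\MM(P)=L(h)\,(dz)^2/(dx/y)$ with $L=\frac{d^2}{dz^2}-\frac{\tau''}{\tau'}\frac{d}{dz}$, which is exactly the paper's $H$, and the paper likewise gets the equality $J=I-2$ from the fact that $L$ kills $1$ and $\tau$ (it expands directly in powers of $\tau$, which is a uniformizer off $S$, rather than running your ODE induction in $z$; both are fine), and gets $J\ge-1$ on $S$ from the simple pole of $d\log(d\tau/dz)$ together with regularity of $h'$ and $h''$.

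The additive-reduction sub-case, however, has a genuine gap. The paper handles it by the same direct local computation (holomorphic lift $h$ of $P$ to $\omega^{-1}$ near $t$, plus the fact that $d\log(d\tau/dz)$ has at worst a simple pole), whereas you propose descending from a semistable cover $\pi:\tilde\CC\to\CC$ of ramification index $e$ using the $(e,f)$ table. If you actually carry out that bookkeeping, it does not close. Since $\Omega^1_{\tilde\CC}=\pi^*\Omega^1_\CC((e-1)\tilde t)$ and $\pi^*\omega_\CC=\omega_{\tilde\CC}(f\tilde t)$, functoriality gives
\[\ord_{\tilde t}\bigl(\MM(\pi^*P)\bigr)=e\,J(P,t)+2(e-1)+f,\]
so a lower bound $B$ upstairs yields $J(P,t)\ge -2+(B+2-f)/e$, which rounds up to $-1$ only when $B\ge f-1$. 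For $\RN{1}_m^*$ with $m>0$ the reduction upstairs is multiplicative, so $B=-1<f-1=0$; for $\RN{4}^*,\RN{3}^*,\RN{2}^*$ one has $f=2,3,5$ and at best $B=0$; and even for $\RN{2},\RN{3},\RN{4},\RN{1}_0^*$ (where $f=1$) the point $\tilde t$ lies in $\tilde S$ because $j\circ\pi$ is too ramified there, forcing $B=-1$. In every additive case your descent therefore proves only $J(P,t)\ge-2$, not $\ge-1$. The bound can be rescued only by exploiting that $\pi^*h$ is a function of $z=w^e$, so its $w$-derivative vanishes to order $e-1$ — i.e., by redoing the local analysis for this particular section rather than quoting the generic upstairs bound — at which point one may as well argue directly at $t$ as the paper does.
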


\begin{proof}
Suppose that $t\not\in S$ and choose a Weierstrass equation $y^2=f(x)$
for $E$ which is regular minimal at $t$.  Since $\tau$ is a
uniformizer at $t$, $(d\tau)^2/(dx/y)$ is a generating section of
$\left(\Omega^1_\CC\right)^{\tensor2}\tensor\omega^{-1}$ near $t$.
Comparing \eqref{eq:MM} and \eqref{eq:M}, we see that
$J(P,t)=\ord_t\MM(P)$ is the usual ord at $t$ of the function $M(P)$:
\[J(P,t)=\ord_tM(P).\]
Expanding the integral $\int_O^Pdx/y$ as a series in $\tau$ near $t$,
say
\[\int_O^Pdx/y=a_0+a_1\tau+a_I\tau^I+\cdots\]
(where $a_0$ and $a_1$ might be zero), we find
\[\ord_t\left(\frac{d^2}{d\tau^2}\int_O^P\frac{dx}y\right)
  =I(P,T)-2\ge0.\]

Now suppose that $t\in S$, and choose a uniformizer $z$ at $t$.  We
rewrite $d^2/d\tau^2$ in terms of $z$:
\[\frac{d}{d\tau}=\left(\frac{d\tau}{dz}\right)^{-1}\frac{d}{dz}
  \and
  \frac{d^2}{d\tau^2}=\left(\frac{d\tau}{dz}\right)^{-2}\frac{d^2}{dz^2}
  -\left(\frac{d\tau}{dz}\right)^{-3}\frac{d^2\tau}{dz^2}\frac{d}{dz}.
\]  
Computing the right hand side of \eqref{eq:MM} divided by the
generating section $(dz)^2/(dx/y)$ of
$\left(\Omega^1_\CC\right)^{\tensor2}\tensor\omega^{-1}$ near $t$,
we obtain
\[H=\frac{d^2}{dz^2}\int_O^P\frac{dx}y
  -\left(\frac{d\tau}{dz}\right)^{-1}\frac{d^2\tau}{dz^2}
  \frac{d}{dz}\int_O^P\frac{dx}y,\]
so $J(P,t)=\ord_t(H)$.  The derivatives of the integrals are regular,
but the factor
\[d\log\left(\frac{d\tau}{dz}\right)
  =\left(\frac{d\tau}{dz}\right)^{-1}\frac{d^2\tau}{dz^2}\]
could have a simple pole (and does except when $\tau$ is a
uniformizer at $t$ and this might happen if $\EE$ has reduction
type $I_0^*$ there).  Thus
$J(P,t)\ge-1$ for $t\in S$.

This completes the proof of the proposition.
\end{proof}

\begin{rem}
  The proof shows why we cannot bound $I(P,t)$ above in terms of
  $J(P,t)$ at points $t\in S$:  If $0<m<n$,
  \[\int_O^P\frac{dx}y=a_0+a_mz^m+a_nz^n+O(z^{n+1}),\]
  and
  \[\tau=b_0+b_mz^m+O(z^{n+1}),\]
  then for generic choices of the coefficients, $I(P,t)=n$ whereas
  $J(P,t)=m-2$, so $J(P,t)<I(P,t)-(n-m+1)$.  
\end{rem}

\begin{cor}\label{cor:bound-C}
  Let
  \[T_\C=\left\{t\in\CC\mid \text{$t\not\in S$ and $I(P,t)>2$}\right\}.\]
  Then
  \[|T_\C|\le 4g-4-d+|S|.\]
\end{cor}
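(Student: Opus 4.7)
The plan is to treat $\MM(P)$ as a global rational (meromorphic) section of the line bundle
\[
  \LL := \left(\Omega^1_\CC\right)^{\tensor 2}\tensor\omega^{-1}
\]
and extract the bound from a degree count. Concretely, provided $\MM(P)\neq 0$, the total order of $\MM(P)$ summed over all closed points of $\CC$ equals
\[
  \deg(\LL)=2(2g-2)-d=4g-4-d.
\]

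First I would verify non-vanishing: since $\EE\to\CC$ is non-isotrivial and $P$ has infinite order, Manin's theorem of the kernel (the fact that $\ker\mu:E(K)\to K$ is the torsion subgroup in the non-isotrivial case) gives $\MM(P)\neq 0$. This licenses the degree identity
\[
  \sum_{t\in\CC} J(P,t)=\deg(\LL)=4g-4-d.
\]

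Next I would split this sum according to $S$ and use Proposition~\ref{prop:bounds}. For $t\notin S$ we have $J(P,t)=I(P,t)-2\ge 0$, with $J(P,t)\ge 1$ precisely when $t\in T_\C$. For $t\in S$ we only know $J(P,t)\ge -1$. Therefore
\[
  4g-4-d=\sum_{t\in T_\C}J(P,t)+\sum_{\substack{t\notin S\\ t\notin T_\C}}J(P,t)+\sum_{t\in S}J(P,t)\ge |T_\C|+0-|S|,
\]
and rearranging gives the stated inequality $|T_\C|\le 4g-4-d+|S|$.

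The only real obstacle is the non-vanishing of $\MM(P)$, which I would justify by invoking Manin's theorem of the kernel (hence the standing non-isotriviality hypothesis recorded at the start of the section). Everything else is routine: the degree of $\LL$ is obtained from $\deg\Omega^1_\CC=2g-2$ and $\deg\omega=d$, and the splitting of the sum of local orders is bookkeeping once Proposition~\ref{prop:bounds} is in hand. It is worth noting, as a sanity check, that the bound loses one unit at every point of $S$ because of the possible simple pole of $d\log(d\tau/dz)$ identified in the proof of Proposition~\ref{prop:bounds}; this is exactly the term $+|S|$ on the right.
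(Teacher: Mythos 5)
Your proof is correct and follows essentially the same route as the paper: Proposition~\ref{prop:bounds} controls the local orders, and a degree count on $\left(\Omega^1_\CC\right)^{\tensor2}\tensor\omega^{-1}$ gives the bound (the paper phrases this by twisting by $S$ and viewing $\MM(P)$ as a regular section of the twisted bundle, which is equivalent to your explicit $-|S|$ bookkeeping). Your explicit remark that $\MM(P)\neq0$ must be checked is a worthwhile addition the paper leaves implicit; it follows either from the theorem of the kernel as you say, or directly from the finiteness of $I(P,t)$ together with Proposition~\ref{prop:bounds}.
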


\begin{proof}
  Indeed,
  \[|T_\C|\le\sum_{t\not\in S} \left(I(P,t)-2\right)=\sum_{t\not\in
      S}J(P,t)\]
  and the latter is the sum of all but finitely many ords of a
  regular section of
  $\left(\Omega^1_\CC\right)^{\tensor2}\tensor\omega^{-1}(S)$. 
\end{proof}

\subsection{Corrected algebraic Manin map}
As above, let $K=\C(\CC)$ and let $E$ over $K$ be given by a
Weierstrass equation $y^2=f(x)$.  It is not difficult to express the
Manin map for this situation in algebraic coordinates, i.e., in terms
of the function field $K$.  However, there are at least two
incorrect statements of the formula in the literature, so we give a
careful derivation here which we will use below in examples.  

Choose a non-trivial $\C$-derivation $\delta$ of $K$ and let $\eta$ be
the meromorphic differential on $\C$ dual to $\delta$.  Extend
$\delta$ to $K$ by setting $\delta x=0$. Extend $\delta$ to
differentials of $E$ by setting $\delta(h\,dx)=(\delta h)\,dx$.  We use
a prime ($'$) to denote the action of $\delta$ on elements of $K$ to
distinguish this action from the lifts.

Suppose that $A,B,C\in K$ are such that
\[L=A\delta^2+B\delta+C\]
is a second order differential operator with
\[L\left(\frac{dx}y\right)=dF\]
with $F\in K(E)$, i.e., $L(dx/y)$ is exact.  Then we get a
well-defined Manin map
\[M(P):=L\int_0^P\frac{dx}y\]
and its invariant version
\[\MM(P)=M(P)\frac{\eta^2}{A(dx/y)}\]
which is a meromorphic section of
$\left(\Omega^1_\CC\right)^{\tensor2}\tensor\omega^{-1}$ 
independent of the choices of $\delta$ and the model of $E$.

The Manin map kills 2-torsion points (indeed, any torsion points), so
it suffices to give a formula for it which is valid when $y(P)\neq0$.

\begin{prop}\label{prop:Manin-coords}
  We have
  \[M(P)=F(P)-F(O)+
    A\left(\frac{x(P)'(-\delta f)(P)}{2y(P)^3}
      +\left(\frac{x(P)'}{y(P)}\right)'\right)
    +B\left(\frac{x(P)'}{y(P)}\right).\]
\end{prop}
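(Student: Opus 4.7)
The plan is to compute $M(P)=L\phi$ directly, where $\phi(t)=\int_O^{P(t)}dx/y$ is viewed locally on $\CC$ as a function defined only up to periods of $dx/y$. Since $L$ kills these periods by hypothesis, $M(P)$ is single-valued even though $\phi$ is not. Parameterize the path from $O$ to $P(t)$ on the fiber at fixed $t$ by $x$, consistent with the convention $\delta x=0$; then $x(P)'=\delta(x(P))$ is the derivative of the $K$-coordinate of the moving section. Writing $\omega=dx/y$, one has $\delta\omega=-(\delta f/(2y^3))\,dx$ since $2y\,\delta y=\delta f$.

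Apply the Leibniz rule for differentiation under the integral sign, taking into account the boundary contribution from the moving endpoint:
\[\delta\phi=\frac{x(P)'}{y(P)}+\int_O^P\delta\omega.\]
A second application produces two boundary terms — one from differentiating the first boundary contribution, and one from evaluating $\delta\omega$ at the moving endpoint — together with a new interior term:
\[\delta^2\phi=\left(\frac{x(P)'}{y(P)}\right)'-\frac{(\delta f)(P)\,x(P)'}{2y(P)^3}+\int_O^P\delta^2\omega.\]

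Now assemble $L\phi=A\,\delta^2\phi+B\,\delta\phi+C\phi$. The three interior contributions combine into the single integral $\int_O^P L\omega=\int_O^P dF=F(P)-F(O)$, while the boundary terms, weighted by $A$ and $B$ respectively, produce the remaining pieces of the asserted formula. This recovers the displayed identity, once one observes that $-(\delta f)(P)\,x(P)'/(2y(P)^3)$ coincides with the term $x(P)'(-\delta f)(P)/(2y(P)^3)$ in the statement.

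There is no serious conceptual obstacle, only some careful bookkeeping. The two points requiring attention are (i) the contribution near $O$, which is handled by the assumption $y(P)\ne 0$ together with the fact that $F$ is defined only up to a constant, so $F(P)-F(O)$ is a well-defined difference of limits; and (ii) legitimacy of differentiating under the integral, which holds at a generic $t$ by analyticity of the integrand along a fixed path on the fiber avoiding the branch locus of $y$ and the support of $P$. Once these are addressed, the proof is reduced to the two Leibniz computations above and the identification $\int_O^P L\omega=F(P)-F(O)$.
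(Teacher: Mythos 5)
Your proof is correct and follows essentially the same route as the paper's: rewrite the integral as an $x$-integral with moving endpoint $x(P)$, apply the Leibniz rule twice (differentiating the integrand first and only then evaluating at the endpoint, which is exactly the subtlety the paper flags as the source of errors elsewhere), and recombine the interior terms into $\int_O^P L(dx/y)=F(P)-F(O)$. Nothing further is needed.
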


\begin{proof}
The tricky point is to evaluate the terms coming from the fact
that the limits of integration vary on $\CC$.  The model $y^2=f(x)$
presents $E$ as a double cover of the Riemann sphere.  Making branch
cuts (say from 0 to 1 and from $t$ to $\infty$ and choosing a branch
of $y=f^{1/2}$ we have
\[\int_O^P\frac{dx}y=\int_{\infty}^{x(P)}\frac1y\,dx
=\int_{\infty}^{x(P)}f^{-1/2}\,dx\]
  where the second integral is along a path in the Riemann sphere
  avoiding the branch cuts.  We have
  \[\delta\int_{\infty}^{x(P)}\frac1y\,dx
    =\int_{\infty}^{x(P)}\delta\left(\frac1y\right)\,dx
    +\frac{x(P)'}{y(P)},\]
  and since $\delta(1/y)=(-\delta y)/y^2=(-\delta f)/(2y^3)$, 
  \[\delta^2\int_{\infty}^{x(P)}\frac1y\,dx
    =\int_{\infty}^{x(P)}\delta^2\left(\frac1y\right)\,dx
    +\frac{x(P)'(-\delta f)(P)}{2y(P)^3}
    +\left(\frac{x(P)'}{y(P)}\right)'.\]
  (The second term on the right is the source of trouble in other
  versions.  The correct order is to apply the $\delta$ to the
  integrand first and then evaluate.)
  Putting everything together,
  \begin{align*}
    L\int_O^P\frac{dx}y&=\int_O^PL\frac{dx}y
                         +A\left(\frac{x(P)'(-\delta f)(P)}{2y(P)^3}
                         +\left(\frac{x(P)'}{y(P)}\right)'\right)
                         +B\left(\frac{x(P)'}{y(P)}\right)\\
    &=F(P)-F(O)                     +A\left(\frac{x(P)'(-\delta f)(P)}{2y(P)^3}
                         +\left(\frac{x(P)'}{y(P)}\right)'\right)
                         +B\left(\frac{x(P)'}{y(P)}\right). 
  \end{align*}
This completes the proof of the proposition.
\end{proof}

\subsection{Examples}
\subsubsection{Manin for Legendre}
  Let $K=\C(t)$ with $\delta=d/dt$, and let $E$ be given by
  \[E:\quad y^2=x(x-1)(x-t).\]
Then it is well known (and easy to check by hand) that
  \[\left(t(1-t)\delta^2+(1-2t)\delta-\frac14\right)\left(\frac{dx}y\right)
    =d\left(\frac{y}{2(x-t)^2}\right).\]
  Applying the proposition, we find that
  \[M(P)=\frac{y(P)}{2(x(P)-t)^2}
    +t(1-t)\left(\frac{x(P)'x(P)(x(P)-1)}{2y(P)^3}+\left(\frac{x(P)'}{y(P)}\right)'\right)
    +(1-2t)\left(\frac{x(P)'}{y(P)}\right).\]

\subsubsection{Tangencies on Legendre 1}
  Now let $L/K$ be the quadratic extension $L=\C(s)$ where
  $t=2-s^2/2$ and consider the point $P=(2,s)$ of $E(L)$.  
  Applying the formula above, we find that
  \[\MM(P)=\frac{-8}{s(s^2-4)(s^2-2)}\frac{(ds)^2}{(dx/y)},\]
  with divisor
  \[\dvsr(\MM(P))=2(\infty)-(s(s^2-4)(s^2-2)).\]
  Since the support of the divisor of $\MM(P)$ is contained in the
  exceptional set $S$, Corollary~\ref{cor:bound-C} shows that there
  are no higher order tangencies between $P$ and the complex Betti
  foliation away from $S$, i.e.,
  \[T_\C=\emptyset.\]
  
\subsubsection{Tangencies on Legendre 2}
  Similar results apply for the point $(a,s)$ where
  $a\in\C\setminus\{0,1\}$ and $s^2=a(a-1)(a-t)$.  With $P=(a,s)$, 
we find that
\[\MM(P)=\frac{-2a^2(a-1)^2}{s\left(s^2-a^2(a-1)\right)\left(s^2-a(a-1)^2\right)}
  \frac{(ds)^2}{(dx/y)},\]
  with divisor
  \[\dvsr(\MM(P))=2(\infty)-(s(s^2-a^2(a-1))(s^2-a(a-1)^2)).\]
  Again, the support of the divisor of $\MM(P)$ is contained in the
  exceptional set $S$, so Corollary~\ref{cor:bound-C} shows that there
  are no higher order tangencies between $P$ and the complex Betti
  foliation away from $S$, i.e.,
  \[T_\C=\emptyset.\]

\subsubsection{Tangencies on Legendre 3}
Now let $L$ be the bi-quadratic extension of $K=\C(t)$ generated by
$s_2$ and $s_3$ with $s_2^2=2(2-1)(2-t)$ and $s_3^2=3(3-1)(3-t)$ so that
$P_2=(2,s_2)$ and $P_3=(3,s_3)$ are points in $E(L)$.  Let
$Q=3P_3-P_2$.  One finds that $L$ is
again a rational function field $\C(u)$ where $s_2=(u^2-6u+3)/(u^2-3)$
and $s_3=(-3u^2 + 6u - 9)/(u^2 - 3)$ and that
$\MM(Q)$ vanishes to order 1 at $u=1$ (where $s_2=1$, $s_3=3$,
and $t=3/2$).  Inspecting the discriminant and $j$-invariant of $E/L$
shows that $u=1$ is not in the exceptional set $S$, so
Proposition~\ref{prop:bounds} shows that $Q$ meets a complex Betti
leaf to order $3$ over $u=1$.  Thus, while unlikely, such higher
intersections do indeed occur.

\bibliography{database}

\end{document}